\newtheorem{prop}{Proposition}
\newtheorem{theorem}{Theorem}
\newtheorem{lemma}{Lemma}
\newtheorem{rem}{Remark}
\newtheorem{definition}{Definition}
\def\N {\mathbb{N}}
\def\R {\mathbb{R}}
\def\d {\mathrm{d}}
\newcommand{\uin}{u^{\mathrm{in}}}
\newcommand{\vin}{v^{\mathrm{in}}}
\begin{document}
\title[A nonlinear diffusion equation arising in game theory]{Self-similar
solutions, regularity and time asymptotics for a nonlinear diffusion
equation arising in game theory}
\author{Marco Antonio Fontelos}
\author{Nastassia Pouradier Duteil}
\author{Francesco Salvarani}
\address{M.A.F.: Instituto de Ciencias Matem\'aticas,(ICMAT,
CSIC-UAM-UCM-UC3M),Campus de Cantoblanco, 28006 Madrid, Spain}
\email{marco.fontelos@icmat.es}
\address{N.P.D.: Sorbonne Universit\'e, Inria, Universit\'e Paris-Diderot
SPC, CNRS, Laboratoire Jacques-Louis Lions, Paris, France}
\email{nastassia.pouradier\_duteil@sorbonne-universite.fr}
\address{F.S.: L\'eonard de Vinci P\^ole Universitaire, Research Center,
92916 Paris La D\'efense, France \& Dipartimento di Matematica ``F.
Casorati'', Universit\`a degli Studi di Pavia, Via Ferrata 1, 27100 Pavia,
Italy}
\email{francesco.salvarani@unipv.it}

\begin{abstract}
In this article, we study the long-time asymptotic properties of a non-linear and non-local equation of diffusive type which describes the 
rock-paper-scissors game in an interconnected population. We fully characterize the self-similar solution and then prove that the solution of the initial-boundary value problem converges to the self-similar profile with an algebraic rate.
\end{abstract}

\maketitle

\section*{Introduction}


The rock-paper-scissors game is not only one of the classical examples in
game theory, but it arises also in other contexts, such as bacterial ecology
and evolution, where it has been extended to the scale of an entire
population. In several situations, indeed, the rock-paper-scissors game
allows to model cyclic competition between species and the stabilization of
bacteria populations \cite{2002Natur, 2004Natur, Patk2012}, i.e. when three
species coexist and there is cyclic domination of the first species on the
second one, of the second species on the third one, and of the third species
on the first one. Moreover, some applications of this game have been
proposed in evolutionary game theory, for example to explain the coexistence
or extinction of species \cite{Shi2010BasinsOA} or male reproductive
strategies \cite{SL96}.

This justifies the importance of having a description of the 
rock-paper-scissors dynamics at the mesoscopic (i.e. kinetic) and macroscopic levels, where the
population is described by a density function: it allows the description of the global dynamics
without needing to take into account the individual situations, and is therefore well adapted
for population with a high number of individuals.

A kinetic version of the rock-paper-scissors game has been studied in \cite{MR4128577}. 
This situation involves a population of players who form
temporary pairs through random encounters. The two members of a
pair play the game once, then look for another contestant to play with, and
so on. The independent variables are the time $t\in\R_+$ and an individual
exchange variable $x$ (which may correspond to the wealth of individuals, if
the game involves agents exchanging a certain amount of money). In the case
of a fully interconnected population, assuming that there are no forbidden
pairs and that players continue to play as long as their wealth allows, the
corresponding kinetic model introduced in \cite{MR4128577} has the form of
an integro-differential equation on the half-line $\R_+= [0,+\infty)$, with a boundary
condition in $x=0$. 

By assuming that players increase the frequency of the
game by a factor of $\varepsilon^{-1}$, (with $\varepsilon >0$) and, at the
same time, reduce the amount played in each iteration of the game by a
factor of $\varepsilon$, in the limit $\varepsilon \to 0$ the authors of
\cite{MR4128577} obtain a non-linear and non-local partial differential
equation at the classical macroscopic level.

In particular, the limiting initial-boundary value problem for the unknown 
$u \ : \ \R_+\times \R_+\to \R$, which represents the density of agents with
wealth $x\in\R_+$ at time $t \in\R_+$, is the following:
\begin{eqnarray}  \label{eq:cont1}
\displaystyle \partial_t u(t,x) = \left(\int_{\R_+} u(t,z) \mathrm{d}z\right)
\partial^2_{x}u(t,x)\quad & \text{ for a.e. } (t,x)\in \R^*_+\times \R^*_+ \\%
[4pt]
\label{eq:cont2}
u(t,0) = 0 \quad & \text{ for a.e. } t\in \R^*_+\\[10pt]
\label{eq:cont3}
u(0,x) = u^{\mathrm{in}}(x) \quad &\text{ for a.e. } x\in \R_+,
\end{eqnarray}
where $u^{\mathrm{in}}\in L^1(\R_+)\cap L^\infty(\R_+)$ and $\R^*_+=(0,+\infty)$.

Existence and uniqueness of a very weak solution of \eqref{eq:cont1}-
\eqref{eq:cont3} have been proven by means of a compactness argument in \cite{MR4128577}. However, several open questions on this problem are still waiting for an answer. In this
article we study two open questions about the initial-boundary value problem 
\eqref{eq:cont1}-\eqref{eq:cont3}, namely the regularity of the problem and
the intermediate asymptotics with respect to a suitable self-similar
solution, which we will precisely identify. We stress that the asymptotic
behavior is one of the main questions on diffusion equations -- see the
review article \cite{vazquez2017asymptotic} and the references therein.

Equation \eqref{eq:cont1} has a mathematical structure that is essentially
non-local. It can be interpreted as a heat equation, whose diffusivity
coefficient depends on the integral of the solution itself (i.e. the total
mass, in the case of non-negative solutions), which is a typical global
quantity of the system. Because of the peculiar structure of the
nonlinearity in \eqref{eq:cont1}, our methods of proof are sometimes 
close to those used in the study
of linear equations \cite{MR1183805} but, in several points, the need
of approaches designed for studying non-linear
equations are necessary (see, for example, \cite{MR959221}).

More specifically, in this article we prove that, similarly to the heat equation, there exists an instantaneous
gain in regularity. We moreover characterize the self-similar solutions of the problem and identify the convergence speed to
the intermediate asymptotic profile under some conditions on the initial condition which we precisely characterize.
We note that the algebraic convergence speed is a consequence of the non-local structure of the problem.

The structure of this article is the following. The study of the regularity of the problem, together with other basic properties of its
solution, are detailed in Section \ref{s:br}.
Then, in Section \ref{s:self}, we study the long-time convergence of the solution toward the self-similar solution.
We illustrate our study numerically in Section \ref{s:num} and, in the Appendix, we treat the convergence to the self-similar solution in the case
of a bounded interval.


\section{Basic results}
\label{s:br}

In this section we deduce and collect some basic results about the initial-boundary
value problem \eqref{eq:cont1}-\eqref{eq:cont3}.

\subsection{Weak formulation}
We first define the very weak formulation of \eqref{eq:cont1}-%
\eqref{eq:cont3} as follows:

\begin{definition}
\label{def:veryweak}
Let $T>0$. A measurable function $u\in L^1([0,T]\times\R_+)$ is
said to be a very weak solution of the initial-boundary value problem %
\eqref{eq:cont1}-\eqref{eq:cont3} if it satisfies
\begin{equation}  \label{eq:veryweaksol}
\begin{array}{l}
\displaystyle \int_0^T \!\! \int_{\R_+}\!\! u(t,x) \partial_t\varphi(t,x) \,%
\mathrm{d} x \,\mathrm{d} t + \int_0^T \!\! \left(\int_{\R_+} \!\! u(t,x_*)
\,\mathrm{d} x_*\right) \int_{\R_+} \!\! u(t,x) \partial^2_{x} \varphi(t,x)
\,\mathrm{d} x \,\mathrm{d} t \\
\displaystyle + \int_{\R_+} \!\!u^{\mathrm{in}}(x)\varphi(0,x) \,\mathrm{d}
x=0%
\end{array}%
\end{equation}
for all $\varphi\in C^1([0,T];C_c^2(\R))\cap L^\infty([0,T]\times\R)$,
such that $\varphi(T,x)=0$ for all $x\in\R_+$, $\varphi_x(t,0)=0$ 
for all $
t\in [0,T]$, where $ C_c^2(\R)$ is the space of $C^2$ compactly supported functions on $\R$.
\end{definition}

Existence and uniqueness of a very weak solution to \eqref{eq:cont1}-\eqref{eq:cont3} was proven in \cite{MR4128577}. Moreover, one can prove that the solution is bounded by the $L^\infty$ norm of the initial data, and if the initial data is non-negative, the solution remains non-negative for all time. The precise results are recalled in the following theorem (see \cite{MR4128577}):

\begin{theorem}
\label{teo:exun} Consider the initial-boundary value problem \eqref{eq:cont1}%
-\eqref{eq:cont3}, with initial condition $u^{\mathrm{in}}\in L^1(\R_+)\cap
L^\infty (\R_+)$ and such that $u^{\mathrm{in}}\geq 0$ for a.e. $x\in\R_+$.
Let $T > 0$. Then, it has a unique very weak solution, which belongs to $%
L^1((0,T)\times \R_+)\cap L^\infty ((0,T)\times \R_+)$. Moreover, $\Vert
u(t, \cdot ) \Vert_{L^\infty(\R_+)} \leq \Vert u^{\mathrm{in}}
\Vert_{L^\infty(\R_+)}$ for a.e. $t\in (0,T)$. Lastly, the solution is
non-negative, i.e. $u(t,x)\geq 0$ for a.e. $t\in(0,T)$ and for a.e. $x\in\R%
_+ $.
\end{theorem}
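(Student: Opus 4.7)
The plan is to reduce the nonlinear, nonlocal problem \eqref{eq:cont1}--\eqref{eq:cont3} to the classical heat equation on $\R_+$ with Dirichlet boundary condition by means of a nonlinear change of time driven by the total mass. Formally, setting $M(t):=\int_{\R_+} u(t,x)\,\mathrm{d}x$ and $\tau(t):=\int_0^t M(s)\,\mathrm{d}s$, the function $v(\tau,x):=u(t(\tau),x)$ should satisfy $\partial_\tau v = \partial_x^2 v$ with the same boundary and initial data. The content of the argument is then to construct $\tau$ self-consistently from $v$ and check that the change of variables can be performed at the very weak level.

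For existence, I would first let $v$ be the unique classical solution of the heat equation on $(0,\infty)\times\R_+$ with Dirichlet condition $v(\tau,0)=0$ and initial datum $\uin$, obtained by odd reflection and convolution with the Gauss kernel. Standard theory gives that $v$ is $C^\infty$ for $\tau>0$, non-negative when $\uin\geq 0$, satisfies $\|v(\tau,\cdot)\|_{L^\infty}\leq\|\uin\|_{L^\infty}$ by the maximum principle, and the integral $N(\tau):=\int_{\R_+} v(\tau,x)\,\mathrm{d}x$ is continuous, non-increasing (since $N'(\tau)=-\partial_x v(\tau,0)\leq 0$), strictly positive for all finite $\tau$ when $\uin\not\equiv 0$, and decays like $\tau^{-1/2}$ at infinity. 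I would then solve the scalar ODE $\tau'(t)=N(\tau(t))$ with $\tau(0)=0$; since $1/N$ is locally integrable on $[0,+\infty)$, this gives a $C^1$, strictly increasing bijection $\tau:[0,+\infty)\to[0,+\infty)$. Defining $u(t,x):=v(\tau(t),x)$ and checking the identity \eqref{eq:veryweaksol} by substituting $\tau=\tau(t)$ in the corresponding heat-equation weak identity for $v$ produces a very weak solution for which $M(t)=N(\tau(t))$ and which inherits non-negativity and the $L^\infty$ bound from $v$.

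For uniqueness, given two very weak solutions $u_1,u_2$ with the same initial datum, I would define $M_i$ and $\tau_i$ as above and show that $v_i(\tau,x):=u_i(t_i(\tau),x)$ solves the Dirichlet heat equation with initial datum $\uin$. Uniqueness for the heat equation forces $v_1=v_2=:v$, so $M_i(t)=N(\tau_i(t))$ for the same function $N$; a Gronwall argument on $|\tau_1-\tau_2|$, using local Lipschitz continuity of $N$ (which follows from the smoothing properties of the heat equation), yields $\tau_1\equiv\tau_2$ and hence $u_1\equiv u_2$.

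The main obstacle is that the time change is defined implicitly through the very solution we seek: one must ensure simultaneously that $N$ stays strictly positive on every finite $\tau$-interval (so that $\tau(t)$ is well defined and smooth) and grows slowly enough that $t\mapsto\tau(t)$ covers all of $[0,+\infty)$. The decay $N(\tau)\sim c\tau^{-1/2}$ extracted from the explicit Dirichlet heat kernel makes both verifications possible, but requires an honest asymptotic analysis of $v$ rather than a soft compactness argument. A secondary technical point is that the test functions of Definition \ref{def:veryweak} --- compactly supported in $x$ with $\varphi_x(t,0)=0$ --- must be shown to remain admissible after the nonlinear change of variables $\tau\leftrightarrow t$ when one justifies \eqref{eq:veryweaksol} rigorously.
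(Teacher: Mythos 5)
The paper does not actually prove this theorem: it is recalled from \cite{MR4128577}, where it is established by a compactness argument. Your route --- freezing the heat semigroup and reconstructing the time change $\tau'(t)=N(\tau(t))$ self-consistently --- is therefore genuinely different from the cited proof, and it is in fact the mechanism the paper itself exploits \emph{a posteriori} (the Fourier representation in Proposition \ref{p:reg} and the function $a(t)=\int_0^t M(s)\,\mathrm{d}s$ are exactly your $\tau$). Your existence half is essentially sound: $N$ is continuous, bounded by $N(0)$ and strictly positive, so the ODE for $\tau$ has a global solution on $[0,T]$ and the change of variables $\mathrm{d}\tau=M(t)\,\mathrm{d}t$ converts the heat-equation weak identity into \eqref{eq:veryweaksol}. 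Two small corrections there: the decay $N(\tau)\sim c\tau^{-1/2}$ requires a finite first moment, which Theorem \ref{teo:exun} does not assume --- but you do not need it, since boundedness and positivity of $N$ already give $\tau(t)\leq N(0)t$ and global solvability; and uniqueness for the ODE should be argued from the monotonicity of $N$ rather than Lipschitz continuity, because $N'(\tau)=-\partial_x v(\tau,0)$ generically blows up like $\tau^{-1/2}$ as $\tau\to 0^+$ for data that are merely $L^1\cap L^\infty$, so $N$ is \emph{not} locally Lipschitz at $\tau=0$.

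The genuine gap is in the uniqueness half. To define $v_i(\tau,x):=u_i(t_i(\tau),x)$ you must invert $t\mapsto\tau_i(t)=\int_0^t M_i(s)\,\mathrm{d}s$, which requires $M_i(t)>0$. But a competitor $u_i$ is only assumed to be a very weak solution in the sense of Definition \ref{def:veryweak}, i.e.\ an $L^1\cap L^\infty$ function satisfying an integral identity; neither its sign nor the sign of its mass is known at that stage --- non-negativity is a \emph{conclusion} of the theorem, not a property of the class in which uniqueness is claimed. Your argument therefore presupposes what it needs to prove for the second solution. This can be repaired, but it requires an additional step performed directly at the level of the weak formulation: for instance, derive from \eqref{eq:veryweaksol} (via the odd extension and Fourier transform, as in the proof of Proposition \ref{p:reg}) that $\hat v_i(t,\xi)=\hat v^{\mathrm{in}}(\xi)\exp\bigl(-4\pi^2\xi^2 A_i(t)\bigr)$ with $A_i(t)=\int_0^t M_i(s)\,\mathrm{d}s$, and observe that $A_i(t)\geq 0$ is forced because otherwise $\hat v_i(t,\cdot)$ could not be bounded while $v_i(t,\cdot)\in L^1(\R)$; uniqueness then reduces to the scalar identity $A_i'(t)=N(A_i(t))$ for a single function $N$ determined by $\uin$. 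Without some such argument the uniqueness claim is circular.
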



\subsection{Improved regularity and positivity}

\label{ss:imp} 

Let $u$ be the very weak solution of the initial-boundary value problem %
\eqref{eq:cont1}-\eqref{eq:cont3}. Then, it is possible to consider its
antisymmetric extension $v$, defined for all $x\in\R$, such that
\begin{equation}
\label{e:antisym}
v(t,x)= u(t,x)\mathds{1}_{x\geq 0} - u(t,-x)\mathds{1}_{x\leq 0},
\end{equation}
for a.e. $t\in (0,T)$.

Consequently, $v$ solves (in the very weak sense) the following auxiliary
initial value problem for the unknown $v\ :\ \R_+\times \R \to \R$
\begin{equation}
\begin{cases}
\label{eq:auxv} \displaystyle \partial_t v(t,x) = \left(\int_{\R^+} v(t,z) \,%
\mathrm{d} z\right) \partial^2_{x}v(t,x)\quad & \text{ for a.e. } (t,x)\in \R%
_+\times \R \\[10pt]
v(0,x) = v^{\mathrm{in}}(x) \quad & \text{ for a.e. } x\in \R,%
\end{cases}%
\end{equation}
where $v^{\mathrm{in}}(x)=u^{\mathrm{in}}(x)\mathds{1}_{x\geq 0} - u^{%
\mathrm{in}}(-x)\mathds{1}_{x\leq 0} $ for a.e. $x\in\R$. Because of the
regularity conditions on the initial data,
we immediately deduce that
$v^{\mathrm{in}}\in L^1(\R)\cap L^\infty (\R)$.
This antisymmetric extension of $u$ will allow us to prove the following result:

\begin{prop}
\label{p:reg} Let $u$ be the very weak solution of the initial-boundary
value problem \eqref{eq:cont1}-\eqref{eq:cont3}, with initial and boundary
conditions satisfying the hypotheses of Theorem \ref{teo:exun}. Then, $u\in
C^\infty((0,T)\times\R^*_+)$. Moreover, $u$ admits the following
semi-explicit representation:
\begin{equation}
\begin{array}{ll}
\displaystyle u(t,x)= \left (4\pi \int_{0}^t\int_{\R_+} u(\theta,z) \,%
\mathrm{d} z \,\mathrm{d} \theta\right)^{-1/2} \times &  \\[10pt]
\displaystyle \int_{\R_+} \!\! u^{\mathrm{in}} (y) \left \{ \exp \!\left
[-(x\!-\!y)^2\left(4\int_{0}^t\!\int_{\R_+} \!\!u(\theta,z) \,\mathrm{d} z \,%
\mathrm{d} \theta\right)^{\!\!-1}\right] \!-\! \exp\! \left
[-(x\!+\!y)^2\left(4\int_{0}^t\!\int_{\R_+} \!\!u(\theta,z) \,\mathrm{d} z \,%
\mathrm{d} \theta\right)^{\!\!-1}\right] \right \} \,\mathrm{d} y. &
\end{array}
\label{eq:usol}
\end{equation}
\end{prop}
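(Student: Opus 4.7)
The strategy is to reduce problem \eqref{eq:auxv} to the classical heat equation on $\R$ via a nonlinear time change, and then to read off both the explicit representation \eqref{eq:usol} and the smoothness from this reduction. Setting $M(t) := \int_{\R_+} u(t,z)\,\mathrm{d}z$, the antisymmetry of $v$ gives $\int_{\R_+} v(t,z)\,\mathrm{d}z = M(t)$, so \eqref{eq:auxv} reads $\partial_t v = M(t)\,\partial_x^2 v$. I would then define the nondecreasing time change $\tau(t) := \int_0^t M(s)\,\mathrm{d}s$; assuming $u^{\mathrm{in}} \not\equiv 0$ (the case $u^{\mathrm{in}} \equiv 0$ being trivial), $M(0) > 0$ so $\tau$ is locally invertible at the origin, and setting $V(\tau,x) := v(t(\tau),x)$ produces a very weak solution on $\R^*_+ \times \R$ of the classical heat equation $\partial_\tau V = \partial_x^2 V$ with initial datum $v^{\mathrm{in}} \in L^1(\R)\cap L^\infty(\R)$.

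By uniqueness of the heat equation in this class, $V$ is given by convolution with the Gaussian kernel. Splitting the resulting integral at $y=0$, using the antisymmetry $v^{\mathrm{in}}(-y) = -u^{\mathrm{in}}(y)$ for $y \geq 0$, and performing the change of variables $y\mapsto -y$ on the negative half-line, this rewrites as
\[
V(\tau,x) = \frac{1}{\sqrt{4\pi\tau}}\int_{\R_+} u^{\mathrm{in}}(y)\left[e^{-(x-y)^2/(4\tau)} - e^{-(x+y)^2/(4\tau)}\right]\mathrm{d}y.
\]
Substituting back $\tau = \int_0^t M(s)\,\mathrm{d}s$ and recalling $v(t,x)=u(t,x)$ for $x \geq 0$ yields formula \eqref{eq:usol}.

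Once this representation is established, regularity is almost immediate. The right-hand side of the previous display is $C^\infty$ in $(\tau,x) \in \R^*_+\times\R$ by differentiation under the integral, since $u^{\mathrm{in}}\in L^1(\R_+)$ and all derivatives of the Gaussian kernel are controlled by integrable factors for $\tau$ in any compact subset of $\R^*_+$. It therefore suffices to prove that $\tau \in C^\infty((0,T))$. Introducing $\widetilde{M}(\tau) := \int_{\R_+} V(\tau,z)\,\mathrm{d}z$, the same reasoning gives $\widetilde{M} \in C^\infty(\R^*_+)$; combined with the autonomous ODE $\tau'(t) = M(t) = \widetilde{M}(\tau(t))$ and the initial condition $\tau(0)=0$, $\tau'(0)=M(0)>0$, a standard bootstrap propagates smoothness of $\widetilde{M}$ onto $\tau$, so that $\tau\in C^\infty((0,T))$ and hence $u(t,x)=V(\tau(t),x)\in C^\infty((0,T)\times\R^*_+)$.

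The main obstacle is to justify rigorously the time change $\tau \leftrightarrow t$ in the very weak framework, where $M$ is a priori only in $L^\infty(0,T)$, as well as to ensure strict monotonicity of $\tau$ so that the inverse $t(\tau)$ is well defined on $(0,T]$. Both points can be handled via a self-consistency argument: one builds a candidate function by combining \eqref{eq:usol} with the implicit equation $\tau(t)=\int_0^t M(s)\,\mathrm{d}s$, checks directly that it solves \eqref{eq:cont1}-\eqref{eq:cont3} in the very weak sense, and invokes the uniqueness statement of Theorem \ref{teo:exun} to identify it with $u$. Strict positivity of $M$ on $(0,T)$ (needed for the inversion) then follows from the strict positivity of the bracketed kernel in \eqref{eq:usol} whenever $u^{\mathrm{in}}\not\equiv 0$.
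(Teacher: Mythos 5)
Your proposal is correct in outline but reaches the representation \eqref{eq:usol} by a genuinely different mechanism than the paper. The paper also passes to the antisymmetric extension $v$, but then takes the spatial Fourier transform of \eqref{eq:auxv}: for each frequency $\xi$ the equation becomes a scalar linear ODE in $t$ with coefficient $-4\pi^2\xi^2 M(t)$, which integrates explicitly to $\hat v(t,\xi)=\hat v^{\mathrm{in}}(\xi)\exp\bigl(-4\pi^2\xi^2 a(t)\bigr)$ with $a(t)=\int_0^t M(s)\,\mathrm{d}s$; smoothness in $x$ is read off from the super-polynomial decay in $\xi$, inverting the Gaussian factor gives the heat-kernel formula, a bootstrap gives joint smoothness, and uniqueness (Theorem \ref{teo:exun}) identifies $v\mathds{1}_{x\geq 0}$ with $u$. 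That route only ever uses the forward map $t\mapsto a(t)$ and never inverts it, so the obstacles you correctly single out --- $M$ a priori only in $L^\infty$, test functions composed with $\tau$ losing one degree of time regularity, and strict monotonicity of $\tau$ needed before $t(\tau)$ exists --- simply do not arise there. Your fallback self-consistency argument is sound and avoids these issues as well: it reduces to solving the scalar problem $a'(t)=M(0)-F(a(t))$, $a(0)=0$ (the paper's \eqref{eq:aF}), whose right-hand side is continuous and nonincreasing in $a$, so the candidate exists and is unique even though $F$ is only H\"older-$\tfrac12$ at $a=0$ for bounded initial data; one then verifies the very weak formulation for the (smooth for $t>0$) candidate and concludes by Theorem \ref{teo:exun}. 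The trade-off is that the paper's Fourier computation extracts the representation directly from the given very weak solution with no fixed point to solve, whereas your construction is more elementary (only the classical heat kernel and Tychonoff-type uniqueness for bounded solutions of the heat equation) at the price of an extra construction-plus-identification step. If you carry out your plan, make sure the identification step goes through the self-consistency route rather than the naive change of variables in the weak formulation, since the latter is precisely where the regularity of $M$ is insufficient a priori.
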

\begin{proof}
We consider the auxiliary problem \eqref{eq:auxv}. We have not yet proven
that $v$ is the unique solution of \eqref{eq:auxv}, but we know, by
construction, that it exists and belongs to $L^1((0,T)\times \R  )\cap
L^\infty ((0,T)\times  \R  )$, because of the results proved in \cite{MR4128577}.

We hence introduce the spatial Fourier transform 
$$\hat v : L^1((0,T)\times \R)\cap
L^\infty ((0,T)\times \R) \to  L^2((0,T)\times \R),
$$
\par\noindent
which is meaningful because of the regularity hypotheses on $v$. We use the
following convention for the Fourier transform of a function and for its
inverse:
\begin{equation*}
\forall \ \xi \in \mathbb{R}, \quad\displaystyle{\hat {v}}(t,\xi )=\int _{\R%
}v(t,x)\ e^{-2\pi i \xi x}\d x,
\end{equation*}
and
\begin{equation*}
\forall \ x \in \mathbb{R}, \quad \displaystyle{\ {v}}(t,x )=\int _{\R}{%
\hat {v}}(t,\xi ) \ e^{2\pi i \xi x}\d \xi.
\end{equation*}
By applying the Fourier transform with respect to the $x$ variable to all
terms in the Cauchy problem \eqref{eq:auxv}, we deduce a problem for the
Fourier transform $\hat v$ of the solution, i.e. we
obtain
\begin{equation}  \label{eq:auxqq}
\begin{cases}
\displaystyle \partial_t \hat v(t,\xi) =-4\pi^2 \xi^2\left(\int_{\R_+}
v(t,z) \,\mathrm{d} z\right) \hat v(t,\xi)\quad & \text{ for a.e. }
(t,\xi)\in \R_+\times \R \\[10pt]
v(0,\xi) = \hat v^{\mathrm{in}} (\xi)= \mathcal{F}(v^{\mathrm{in}})(\xi)\quad & \text{ for a.e. } x\in \R.%
\end{cases}%
\end{equation}
This auxiliary problem can be integrated in time, allowing to deduce the
integral form of the initial-boundary value problem \eqref{eq:auxv}:
\begin{equation}  \label{eq:aux1}
\hat v (t,\xi)= \hat v^{\mathrm{in}} (\xi) \exp\left [-4\pi^2 \xi^2\int_0^t
\left(\int_{\R_+} v( \theta  ,z) \,\mathrm{d} z\right) \d\theta \right]
\end{equation}
for all $\xi\in\R$.

Thanks to the regularity of $u^{\mathrm{in}}$, we have that
$\hat v^{\mathrm{in}}\in L^\infty(\R)$.
Hence, by Formula \eqref{eq:auxqq}
the decay to zero of $\hat v$
when $\xi$ tends to $+\infty$ is faster than polynomial, for any degree of
the polynomial. Consequently, $v\in L^1((0,T); C^\infty(\R))$ (see, for example, {\cite{MR1157815}}).

By applying the inverse Fourier transform to 
 the second factor of Equation \eqref{eq:aux1}, we
find
\begin{equation*}
\begin{split}
\mathcal{F}^{-1}& \left(\exp\left (-4\pi^2 \xi^2\int_0^t
\left(\int_{\R_+} v( \theta  ,z) \,\mathrm{d} z\right) \d\theta \right)\right) \\
& = 
\left(4\pi \! \int_{0}^t \!\! \int_{\R_+} \! 
v(\theta,z) \,\mathrm{d} z \,\mathrm{d} \theta \right)^{-1/2} 
\exp \left [-x^2\left(4\int_{0}^t\!\!
\int_{\R_+} \! v(\theta,z)\,\mathrm{d} z \,\mathrm{d}
\theta\right)^{-1}\right],
\end{split}
\end{equation*}
so that
\begin{equation*}
v (t,x) =  \left (4\pi \! \int_{0}^t \!\! \int_{\R_+} \! 
 v(\theta,z) \,\mathrm{d} z \,\mathrm{d} \theta \right)^{-1/2}  
\int_{\R}\! v^{\mathrm{in}} (y) \exp \left [-(x-y)^2\left(4\int_{0}^t\!\!
\int_{ \R_+} \!  v(\theta,z)\,\mathrm{d} z \,\mathrm{d}
\theta\right)^{-1}\right] \,\mathrm{d} y.
\end{equation*}
for all $x\in\R$.

We note that, if $v\in L^1((0,T); C^\infty(\R))$, then the right-hand side
of the previous equation is, in fact, a quantity belonging to $C((0,T); C^\infty(\R))$. By a bootstrap argument \cite{MR0241822, MR1157815}, we immediately deduce that $v\in
C^\infty((0,T)\times\R)$.

In particular, when $x> 0$, we can write the previous expression in the
following way:
\begin{equation*}
v (t,x)=\left (4\pi \!\int_{0}^t\int_{\R_+}  v(\theta,z)\,\mathrm{%
d} z \,\mathrm{d} \theta\right)^{-1/2} \times
\end{equation*}
\begin{equation*}
\int_{\R_+}\!\! \! u^{\mathrm{in}}(y) \!\left \{\! \exp \!\left
[-(x\!-\!y)^2\!\left(4\!\int_{0}^t\!\int_{\R_+} \!\! v(\theta,z) \,%
\mathrm{d} z \,\mathrm{d} \theta\right)^{\!\!-1}\right] \!-\! \exp\! \left
[-(x\!+\!y)^2\!\left(4\!\int_{0}^t\!\int_{\R_+} \!\! v(\theta,z) \,%
\mathrm{d} z \,\mathrm{d} \theta\right)^{\!\!-1}\right]\! \right \}\! \,%
\mathrm{d} y.
\end{equation*}
Moreover, for $x=0$, we have that $v (t,0)=0$ for all $t\in(0,T)$.
Furthermore, $v$ is clearly strictly positive for all $x>0$ provided that $%
u^{\mathrm{in}}$ is non-negative for a.e. $x\in\R_+$. By comparing %
\eqref{eq:auxv} and \eqref{eq:cont1}-\eqref{eq:cont3}, we deduce that 
$\tilde{u}= v \mathds{1}_{x\geq 0}$ satisfies 
the initial-boundary value problem \eqref{eq:cont1}-\eqref{eq:cont2}  with initial value $\tilde{u}(0,\cdot)=\vin \mathds{1}_{x\geq 0}$.

Because of the uniqueness of the very weak solution of \eqref{eq:cont1}-\eqref{eq:cont3} (see 
Theorem \ref{teo:exun}), we deduce that
$\tilde{u}(t,x)= v(t,x)\mathds{1}_{x\geq 0}=u(t,x)$
\color{black}
for a.e. $(t,x)\in(0,T)\times\R_+$. Consequently,
 the previous
computation allows to obtain a semi-explicit representation of $u$:
\begin{equation*}
\begin{array}{ll}
\displaystyle u(t,x)= \left (4\pi \int_{0}^t\int_{\R_+} u(\theta,z) \,%
\mathrm{d} z \,\mathrm{d} \theta\right)^{-1/2} \times &  \\[10pt]
\displaystyle \int_{\R_+} \!\! u^{\mathrm{in}} (y) \left \{ \exp \!\left
[-(x\!-\!y)^2\left(4\int_{0}^t\!\int_{\R_+} \!\!u(\theta,z) \,\mathrm{d} z \,%
\mathrm{d} \theta\right)^{\!\!-1}\right] \!-\! \exp\! \left
[-(x\!+\!y)^2\left(4\int_{0}^t\!\int_{\R_+} \!\!u(\theta,z) \,\mathrm{d} z \,%
\mathrm{d} \theta\right)^{\!\!-1}\right] \right \} \,\mathrm{d} y. &
\end{array}%
\end{equation*}
Finally, we underline that $u\in C^\infty((0,T)\times\R^*_+)$ because $u$
inherits the regularity properties of $v$.
\end{proof}


\subsection{Some quantitative bounds}


The first step in our analysis consists in proving some uniform estimates.
 In all that follows, we will assume that $\uin\geq 0$.

\begin{prop}
\label{p:mass} Let $u$ be the solution of the initial-boundary value problem %
\eqref{eq:cont1}-\eqref{eq:cont3} and let
\begin{equation*}
M :t\mapsto\int_{\R_+} u(t,x) \,\mathrm{d} x.
\end{equation*}
Then $M$ is a decreasing function of time. In particular, $%
M\in C^\infty((0,T))$ and, for all $t\in \R_+$,
\begin{equation*}
M (t) \leq M (0)= \int_{\R_+} u^{\mathrm{in}}(x) \,\mathrm{d} x.
\end{equation*}
\end{prop}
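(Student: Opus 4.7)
The plan is to extract a scalar identity for $M(t)$ from the semi-explicit representation \eqref{eq:usol} in Proposition \ref{p:reg}, reducing all three claims to elementary properties of the error function. Set $A(t) := \int_0^t M(\theta)\,\d\theta$, so that \eqref{eq:usol} reads
\begin{equation*}
u(t,x) = \frac{1}{\sqrt{4\pi A(t)}}\int_{\R_+} \uin(y)\bigl[e^{-(x-y)^2/(4A(t))} - e^{-(x+y)^2/(4A(t))}\bigr]\dd y.
\end{equation*}

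First I would integrate the above identity in $x$ over $\R_+$ and apply Tonelli's theorem (legitimate because the bracketed expression is nonnegative for $x,y\geq 0$, since $(x+y)^2\geq(x-y)^2$, and $\uin\in L^1(\R_+)$). A direct change of variables yields, for $y,A>0$,
\begin{equation*}
\int_0^{+\infty}\bigl[e^{-(x-y)^2/(4A)} - e^{-(x+y)^2/(4A)}\bigr]\dd x = 2\sqrt{\pi A}\,\erf\!\Bigl(\tfrac{y}{2\sqrt{A}}\Bigr),
\end{equation*}
which, after cancellation with the prefactor $1/(2\sqrt{\pi A(t)})$, gives the key identity
\begin{equation*}
M(t) = \int_{\R_+} \uin(y)\,\erf\!\Bigl(\frac{y}{2\sqrt{A(t)}}\Bigr)\dd y. \qquad(\star)
\end{equation*}

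All three conclusions now follow from $(\star)$. Since $u\geq 0$, $A$ is nondecreasing, so $t\mapsto y/(2\sqrt{A(t)})$ is nonincreasing for each $y>0$; because $\erf$ is strictly increasing on $\R_+$, the integrand of $(\star)$ is pointwise nonincreasing in $t$, and hence $M$ is decreasing. The bound $0\leq\erf\leq 1$ gives immediately $M(t)\leq \int_{\R_+}\uin\dd y = M(0)$, with equality attained in the limit $t\to 0^+$ by dominated convergence (as $A(t)\to 0^+$ forces the argument of $\erf$ to $+\infty$ pointwise in $y>0$). For the $C^\infty$ regularity, I would run a bootstrap on $(\star)$: the boundedness of $M$ by $M(0)$ shows that $A$ is Lipschitz; dominated convergence on $(\star)$ upgrades $M$ to continuous, hence $A\in C^1$ with $A'=M$; differentiating $(\star)$ under the integral sign (the $t$-derivative of $\erf(y/(2\sqrt{A(t)}))$ is a Gaussian in $y$ times a smooth function of $A(t)$ alone, uniformly bounded on compact subintervals of $(0,T)$ where $A$ stays strictly positive) lifts $M$ to $C^1$; iterating yields $M\in C^\infty((0,T))$.

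The main (and essentially the only) subtlety is ensuring $A(t)>0$ for every $t\in(0,T)$, so that the derivatives of $\erf(y/(2\sqrt{A(t)}))$ remain well defined throughout the bootstrap. This is resolved by dichotomy: if $\uin\not\equiv 0$ then $M(0)>0$, and by the continuity of $M$ just established $M$ stays positive in a right-neighborhood of $0$, so that $A(t)>0$ for every $t\in(0,T)$; the degenerate case $\uin\equiv 0$ yields $u\equiv 0$ and makes the statement trivial. No other step in the argument presents any genuine difficulty.
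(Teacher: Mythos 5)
Your proof is correct, but it takes a genuinely different route from the paper's. The paper argues differentially: integrating \eqref{eq:cont1} in $x$ gives $M'(t)=\bigl(\lim_{x\to\infty}\partial_x u(t,x)-\partial_x u(t,0)\bigr)M(t)$, and differentiating the representation \eqref{eq:usol} in $x$ shows that the boundary flux $\partial_x u(t,0)$ is nonnegative while the flux at infinity vanishes, whence $M'\le 0$. You instead integrate \eqref{eq:usol} in $x$ to obtain the closed-form identity $M(t)=\int_{\R_+}\uin(y)\,\erf\bigl(y/(2\sqrt{A(t)})\bigr)\dd y$ and read off both the monotonicity and the bound $M\le M(0)$ from the monotonicity and boundedness of $\erf$. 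Your identity $(\star)$ is in fact exactly the relation $a'(t)+F(a(t))=M(0)$ that the paper derives only later, in the proof of Lemma \ref{lem:a} (one checks $F(a)=M(0)-\int_{\R_+}\uin(s)\,\erf(s/(2\sqrt{a}))\dd s$), so you have anticipated a computation the paper postpones; your version also gives a more self-contained treatment of the smoothness of $M$, which the paper dispatches in one sentence. Two minor remarks, neither of which affects validity. First, the $t$-derivative of $\erf\bigl(y/(2\sqrt{A(t)})\bigr)$ is $y$ times a Gaussian in $y$ times a function of $A,A'$, not a pure Gaussian; the domination still holds since $y\,e^{-y^2/(4A)}\lesssim\sqrt{A}$ uniformly, so the integrand is controlled by a constant multiple of $\uin(y)$ on compact subsets of $(0,T)$. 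Second, your dichotomy for $A(t)>0$ is slightly circular as written, since the continuity of $M$ near $t=0$ is itself extracted from $(\star)$; it is cleaner to note that if $A$ vanished on some interval $[0,t_0]$ then $u\equiv 0$ there and the very weak formulation \eqref{eq:veryweaksol} would force $\uin\equiv 0$, or simply to invoke the strict positivity of $u$ for $t,x>0$ established in Proposition \ref{p:reg}.
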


\begin{proof}
The result is a direct consequence of the regularity proven in Proposition %
\ref{p:reg}. Integrating in $x$ Equation \eqref{eq:cont1}, it holds
\begin{equation*}
M^{\prime }(t) = \left(\int_0^{+\infty}\partial_x^2 u(t,z) \mathrm{d}z \right)
M(t) = \left(\lim_{x\rightarrow+\infty}\partial_x u(t,x) - \partial_x u(t,0)\right) M(t).
\end{equation*}
By differentiating both sides of Equation \eqref{eq:usol} with respect to $x$%
, we obtain
\begin{equation*}
\begin{array}{ll}
\displaystyle \partial_x u(t,x)\! = \! -\frac 2{\sqrt{\pi}} \left (4\!
\int_{0}^t\int_{\R_+} \! u(\theta,z) \,\mathrm{d} z \,\mathrm{d}
\theta\right)^{-3/2} \!\!\!\! \int_{\R_+} \!\! \!u^{\mathrm{in}} (y)(x-y)
\exp\! \left [-(x\!-\!y)^2\! \left(4\! \int_{0}^t\!\int_{\R_+}
\!\!u(\theta,z) \,\mathrm{d} z \,\mathrm{d} \theta\right)^{\!\!-1}\right] \!%
\mathrm{d} y &  \\[10pt]
+\displaystyle \frac 2{\sqrt{\pi}} \left (4 \int_{0}^t\int_{\R_+}
u(\theta,z) \,\mathrm{d} z \,\mathrm{d} \theta\right)^{-3/2} \int_{\R_+}
\!\! u^{\mathrm{in}} (y)(x+y) \exp\! \left
[-(x\!+\!y)^2\left(4\int_{0}^t\!\int_{\R_+} \!\!u(\theta,z) \,\mathrm{d} z \,%
\mathrm{d} \theta\right)^{\!\!-1}\right] \,\mathrm{d} y. &
\end{array}%
\end{equation*}
We deduce that $\partial_x u(t,0)\geq 0$ and $\lim_{x\to+\infty} \partial_x
u(t,x) =0$ for all $t\in(0,T) $.
The thesis follows directly.
\end{proof}

For future purposes, we introduce the spatial first moment $M_1:\R_+\to\R$
such that, for $u$ solution of \eqref{eq:cont1}-\eqref{eq:cont3}, it holds
\begin{equation*}
M_1(t):=\int_{\R_+}x \, u (t,x)\,\mathrm{d} x, \qquad \text{for all } t\in\R%
_+.
\end{equation*}
Moreover, we introduce the spatial second moment $M_2:\R_+\to\R$ such that,
for $u$ solution of \eqref{eq:cont1}-\eqref{eq:cont3}, it holds
\begin{equation*}
M_2(t):=\int_{\R_+}x^2 \, u (t,x)\,\mathrm{d} x, \qquad \text{for all } t\in%
\R_+.
\end{equation*}
From here onward, we consider initial data with bounded spatial first
moment, i.e. we suppose that the following property is satisfied.
\begin{definition}
\label{hyp:firstmoment} 
The initial condition $u^{\mathrm{in}}\in L^1(\R_+)\cap L^\infty(\R_+)$ is
admissible if and only if 
\begin{equation*}
M_1(0)=\int_{\R_+}xu^{\mathrm{in}}(x)\,\mathrm{d} x<+\infty.
\end{equation*}
\end{definition}

The following result holds.

\begin{prop}
Let $u$ be a (strong) solution of \eqref{eq:cont1}-\eqref{eq:cont3}, and
suppose that $u^{\mathrm{in}}$  is admissible (see Definition \ref{hyp:firstmoment}).
Then the spatial first moment of $u
$ is conserved, i.e. for all $t\in\R_+$,
\begin{equation*}
\int_{\R_+} x u(t,x) \,\mathrm{d} x= \int_{\R_+} x u^{\mathrm{in}}(x) \,%
\mathrm{d} x.
\end{equation*}
\end{prop}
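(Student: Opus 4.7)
The plan is to exploit the semi-explicit formula \eqref{eq:usol} and compute $M_1(t)$ directly by Fubini. Setting $A(t):=\int_0^t M(\theta)\dd\theta$, formula \eqref{eq:usol} reads
$$u(t,x) = \frac{1}{\sqrt{4\pi A(t)}}\int_{\R_+}\uin(y)\Bigl[e^{-(x-y)^2/(4A(t))} - e^{-(x+y)^2/(4A(t))}\Bigr]\dd y.$$
The bracketed quantity is non-negative on $\R_+\times\R_+$ (since $(x-y)^2\le(x+y)^2$ when $x,y\ge 0$) and $\uin\ge 0$, so the integrand $xu(t,x)$ is non-negative. Tonelli's theorem therefore lets me swap the $x$ and $y$ integrals in $M_1(t)=\int_{\R_+}xu(t,x)\dd x$ without any a priori integrability check.

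The main step would then be to evaluate, for each fixed $y\ge 0$, the inner integral
$$I(y):=\int_0^\infty x\Bigl[e^{-(x-y)^2/(4A)} - e^{-(x+y)^2/(4A)}\Bigr]\dd x.$$
Performing the substitutions $z=x-y$ and $z=x+y$ respectively and splitting $\int_{-y}^\infty = \int_{-y}^y + \int_y^\infty$, on $[y,\infty)$ the two integrands combine via $(z+y)-(z-y)=2y$, while on $[-y,y]$ the odd piece $\int_{-y}^y ze^{-z^2/(4A)}\dd z$ vanishes by symmetry. What remains is $I(y)=2y\int_0^\infty e^{-z^2/(4A)}\dd z = y\sqrt{4\pi A}$. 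Substituting back into $M_1(t)$, the factors $\sqrt{4\pi A(t)}$ cancel and I obtain $M_1(t) = \int_{\R_+} y\uin(y)\dd y = M_1(0)$, which is the desired conservation.

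I do not anticipate any serious obstacle: admissibility of $\uin$ guarantees $M_1(0)<\infty$, and Tonelli then confirms a posteriori that $M_1(t)<\infty$ for all $t\in\R_+$. A shorter equivalent route is to differentiate the Fourier identity \eqref{eq:aux1} in $\xi$ and evaluate at $\xi=0$: the Gaussian factor $e^{-4\pi^2\xi^2 A(t)}$ equals $1$ and its $\xi$-derivative vanishes at the origin, so $\partial_\xi\hat v(t,0) = \partial_\xi\hat v^{\mathrm{in}}(0)$. The moment--derivative identity $\partial_\xi\hat v(t,0) = -2\pi i\int_\R xv(t,x)\dd x$ then gives $\int_\R xv(t,x)\dd x = \int_\R x\vin(x)\dd x$, and the antisymmetry of $v$ from \eqref{e:antisym} converts this back into $M_1(t)=M_1(0)$. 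A third route would be the standard parabolic one, multiplying \eqref{eq:cont1} by $x$, integrating by parts and using $u(t,0)=0$ together with the Gaussian decay of $u$ and $\partial_x u$ at infinity readable off \eqref{eq:usol}; this is slightly heavier because it requires justifying the vanishing of boundary terms at $+\infty$.
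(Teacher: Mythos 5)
Your main argument is correct, and it takes a genuinely different route from the paper. The paper proves the statement by passing to the antisymmetric extension $v$, performing the time change $a(t)=\int_0^t M(\tau)\,\mathrm{d}\tau$ so that $\tilde v(a,x)$ solves the heat equation on $\R$, and then invoking conservation of the first moment for the heat semigroup on the time interval $\mathcal{T}=(0,\int_0^{+\infty}M(\tau)\,\mathrm{d}\tau)$ (taking care that this interval might be bounded, which is irrelevant for the conclusion). You instead compute $M_1(t)$ directly from the semi-explicit representation \eqref{eq:usol}: your Gaussian bookkeeping is right, $I(y)=2y\int_0^\infty e^{-z^2/(4A)}\,\mathrm{d}z=y\sqrt{4\pi A}$, the Tonelli step is legitimate because the bracketed kernel is non-negative for $x,y\ge 0$ and $u^{\mathrm{in}}\ge 0$, and admissibility gives finiteness a posteriori. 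In effect you make explicit, at the level of the half-line kernel, the Gaussian moment computation that the paper's appeal to "the heat equation conserves the first moment" leaves implicit; the price is a slightly longer calculation, the gain is a self-contained argument that also re-derives $M_1(t)<\infty$ for $t>0$ rather than assuming it. Two minor points you should state: the case $t=0$ is trivial and for $t>0$ one has $A(t)>0$ (so the prefactor in \eqref{eq:usol} makes sense), as the paper notes; and your Fourier-side alternative, while correct in outline, would need the extra observation that $x\,v(t,\cdot)\in L^1(\R)$ to identify $\partial_\xi\hat v(t,0)$ with the first moment, which is exactly what your primary Tonelli argument supplies for free.
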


\begin{proof}
Consider $u:\R_+\times\R_+\to\R$ a solution to \eqref{eq:cont1}-%
\eqref{eq:cont3}, and let $M$ be the total mass defined in Proposition \ref%
{p:mass}. Let $v:\R_+\times\R\to\R$ be the antisymmetric extension of $u$
defined in Equation \eqref{e:antisym} and studied in Subsection \ref{ss:imp}.
Notice that
\begin{equation*}
\int_{\R} x v(t,x) \,\mathrm{d} x = \int_{\R_+} x u(t,x)\,\mathrm{d} x -
\int_{\R_-} x u(t,-x)\,\mathrm{d} x = \int_{\R_+} x u(t,x)\,\mathrm{d} x +
\int_{\R_+} x u(t,x)\,\mathrm{d} x = 2 \int_{\R_+} x u(t,x)\,\mathrm{d} x.
\end{equation*}
Now, let $a:\R_+\to\R_+$ be defined by
\begin{equation*}
a:t\mapsto \int_0^t M (\tau)\,\mathrm{d} \tau,
\end{equation*}
and define $\tilde{v}:\R_+\times\R\to\R$ as follows: $\tilde{v}%
(a(t),x)=v(t,x)$ for all $(t,x)\in\R_+\times\R$. Then $\tilde{v}$ is a
solution to
\begin{equation*}
\begin{cases}
\displaystyle \partial_{a} \tilde{v}(a,x) = \tilde{v}_{xx}(a,x)\quad
& (a,x)\in \mathcal{T}\times \R, \\[10pt]
\tilde{v}(0,x) = v^{\mathrm{in}}(x) \quad & \text{ for a.e. } x\in \R,%
\end{cases}%
\end{equation*}
where
\begin{equation*}
\mathcal{T} =\left( 0, \int_0^{+\infty} M (\tau)\,\mathrm{d} \tau \right).
\end{equation*}
Hence, $\tilde{v}$ satisfies the Cauchy problem for the heat equation on the
real line, at least in the time interval $\mathcal{T}$. Since $M(0)>0$ and $%
M $ is continuous with respect to $t\in\R_+$, we deduce that
\begin{equation*}
\int_0^{t} M (\tau)\,\mathrm{d} \tau>0 \quad \text{for all }t>0.
\end{equation*}
At this point, we do not know if
\begin{equation*}
\lim_{t\to+\infty} \int_0^{t} M (\tau)\,\mathrm{d} \tau =+\infty \quad \text{%
or} \quad \lim_{t\to+\infty} \int_0^{t} M (\tau)\,\mathrm{d} \tau <+\infty.
\end{equation*}
However, this is not a problem in our case. It is enough to know that the
first moment of $\tilde{v}$, i.e.
\begin{equation*}
\int_\R x\tilde{v}(\cdot,x)\,\mathrm{d} x,
\end{equation*}
is conserved at least in $\mathcal{T}$. Hence, the first moment of $v$ is
also conserved, and
\begin{equation*}
\int_{\R_+} x u(t,x)\,\mathrm{d} x = \frac{1}{2}\int_{\R} x v(t,x)\,\mathrm{d%
} x = \frac{1}{2}\int_{\R} x \tilde{v}(\tau(t),x)\,\mathrm{d} x
\end{equation*}
is also conserved.
\end{proof}

 Because the first moment is conserved, from here onwards, we
will denote by $M_1$ its value, defined by $M_1=M_1(0)=M_1(t)$ for all $t\in%
\R_+$.

\section{Self-similar solutions and large-time asymptotics}
\label{s:self}


In this section, we
consider the initial-boundary value problem \eqref{eq:cont1}-\eqref{eq:cont3}, 
 and always suppose that the initial data are admissible (i.e. we suppose that $u^{\mathrm{in}}$ satisfies Definition
\ref{hyp:firstmoment}).

Note that the final time $T$ appearing in the statement of Theorem \ref{teo:exun} is finite, but arbitrary, so that the large-time asymptotics of the problem makes sense.


Let $\mu\in\R$. We
look for self-similar solutions $%
g_\mu$ of the form
\begin{equation*}
u(t,x)=t^{\mu-1}g_\mu(x/t^{\mu}),
\end{equation*}
so that the mass of the solution $u$ satisfies for all $t\in\R_+$:
\begin{equation*}
\int_{\R_+} u(t,z)\mathrm{d}z = t^{2\mu-1} \int_{\R_+} g_\mu(\xi)\mathrm{d}\xi.
\end{equation*}
Then $g_\mu(\eta )$ satisfies the following non-local differential equation:
for all $\eta\in\R_+$,
\begin{equation*}
(\mu-1)g_\mu(\eta)-\mu \eta g_{\mu }^{\prime }(\eta)=\left( \int_{0}^{\infty
}g_\mu(s)\,\mathrm{d} s\right) g_{\mu}^{\prime \prime }(\eta).
\end{equation*}
We apply the rescaling 
$$
\eta :\xi\mapsto \left( \int_{0}^{+\infty}g_\mu(s)\,
\mathrm{d} s\right) ^{{1}/{2}}\xi,
$$ 
and denote by 
$$
f_\mu:\xi\mapsto
g_\mu\left(\left(\int_0^{+\infty} g_\mu(s)\,\mathrm{d}s\right)^{1/2}\xi\right)
$$
the solution to the simplified differential equation:
\begin{equation}
(\mu-1)f_\mu(\xi )-\mu\xi f_{\mu }^{\prime }(\xi )=f_{\mu}^{\prime \prime
}(\xi ).
\label{ss1}
\end{equation}
Its relation with $u$ is given by: 
\begin{equation}
u(t,x)=t^{\mu-1}f_\mu\left(\left(\int_{\R_+} u(t,z)\mathrm{d}z\right)^{-\frac{1}{2}}
\frac{x}{t^{\frac{1}{2}}}\right) = t^{\mu-1}f_\mu\left(\left(\int_{\R_+}
f_\mu(s)ds\right)^{-1} \frac{x}{t^{\mu}}\right),  \label{eq:selfsimsol}
\end{equation}
where we used the following relations: 
\[
\int_{\R_+}g_\mu(\eta) d\eta = \left(\int_{\R_+}f_\mu(\xi) d\xi\right)^2
= t^{-2\mu+1}\int_{\R_+}u(t,x)dx.
\]
 The following Proposition guaranties the existence of self-similar
solutions of the form \eqref{eq:selfsimsol} for any $\mu \in [\frac{1}{3},1)$%
.

\begin{prop}
\label{lem:solf} For $\mu \in \left[ \frac{1}{3},1\right) $
there exists a solution to \eqref{ss1} such that $f(0)=0$ and $f(\xi
)\rightarrow 0$ as $\xi \rightarrow \infty $. The solution is positive and
such that if $\mu \in \left( \frac{1}{3},1 \right) $,
\begin{equation*}
f_\mu(\xi )=O(\xi ^{1-\frac{1}{\mu }})\textrm{ as }\xi \rightarrow \infty,
\end{equation*}%
%
and for $\mu =\frac{1}{3}$,
\begin{equation*}
f_{\frac{1}{3}}(\xi )=\xi e^{-\frac{1}{6}\xi ^{2}}.
\end{equation*}
\end{prop}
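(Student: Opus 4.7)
The plan is to handle the explicit case $\mu=1/3$ directly and to reduce the range $\mu\in(1/3,1)$ to a Weber (parabolic cylinder) equation via a Liouville substitution.

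\emph{Case $\mu=1/3$.} I would simply substitute $f_{1/3}(\xi)=\xi e^{-\xi^2/6}$ into \eqref{ss1} and check the identity; the conditions $f(0)=0$, $f(\xi)\to 0$, and positivity on $(0,\infty)$ are then immediate.

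\emph{Case $\mu\in(1/3,1)$; existence and asymptotics.} Equation \eqref{ss1} is linear with analytic coefficients, so the initial value problem $f_\mu(0)=0$, $f'_\mu(0)=1$ admits a unique $C^{\infty}$ solution on $[0,\infty)$. To identify its behavior at infinity I would eliminate the first-derivative term by the Liouville substitution $f_\mu(\xi)=e^{-\mu\xi^2/4}w_\mu(\xi)$, which turns \eqref{ss1} into the Schr\"odinger-type equation
\[
w_\mu''(\xi)=V(\xi)\,w_\mu(\xi),\qquad V(\xi)=\frac{\mu^2\xi^2}{4}+\frac{3\mu}{2}-1,
\]
and then rescale $\zeta=\sqrt{\mu}\,\xi$ to obtain Weber's equation $w''(\zeta)=(\zeta^2/4+a)w$ with $a=3/2-1/\mu$. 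Classical asymptotics for parabolic cylinder functions produce two linearly independent solutions of \eqref{ss1} with
\[
f_+(\xi)\sim C_+\,\xi^{1-1/\mu}\quad\text{and}\quad f_-(\xi)\sim C_-\,\xi^{1/\mu-2}e^{-\mu\xi^2/2}\qquad(\xi\to\infty),
\]
so every solution tends to $0$ at infinity (since $1-1/\mu<0$). The sharp rate $f_\mu=O(\xi^{1-1/\mu})$ holds provided $f_\mu$ is not a scalar multiple of $f_-$, i.e.\ provided $f_-(0)\neq 0$. Using the explicit formula $D_\nu(0)=2^{\nu/2}\sqrt{\pi}/\Gamma((1-\nu)/2)$ with index $\nu=1/\mu-2\in(-1,1)$, this quantity vanishes only at $\nu=1$, i.e.\ at the excluded endpoint $\mu=1/3$, so $f_-(0)\neq 0$ throughout $(1/3,1)$.

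\emph{Positivity.} This is the step I expect to be the main obstacle. When $\mu\in[2/3,1)$ one has $V\geq 0$ on $[0,\infty)$, so $w_\mu''\geq 0$ wherever $w_\mu\geq 0$; together with $w_\mu(0)=0$, $w_\mu'(0)=1$, convexity propagates positivity and yields $f_\mu=e^{-\mu\xi^2/4}w_\mu>0$. When $\mu\in(1/3,2/3)$, $V$ is negative on the bounded interval $(0,\xi_*)$ with $\xi_*=2\sqrt{1-3\mu/2}/\mu$, so zeros of $w_\mu$ could \emph{a priori} arise there. I would exclude them using Sturm's comparison theorem for $w_\mu''+(-V)w_\mu=0$ against the constant-coefficient equation $y''+(1-3\mu/2)y=0$: since $-V(\xi)\leq 1-3\mu/2$ pointwise, the first positive zero of $w_\mu$ lies at distance no smaller than $\pi/\sqrt{1-3\mu/2}$, and a short calculation shows $\pi/\sqrt{1-3\mu/2}>\xi_*$ precisely when $\mu>2/(\pi+3)\approx 0.326$, a condition automatically satisfied on $(1/3,2/3)$. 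Beyond $\xi_*$ the potential $V$ is positive, so convexity again rules out sign changes, and I conclude $f_\mu>0$ on $(0,\infty)$.
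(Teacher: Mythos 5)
Your route is genuinely different from the paper's: the paper expands the odd solution in a power series, identifies it as $\xi\,{}_{1}F_{1}\!\left(\tfrac{1}{2\mu},\tfrac32;-\tfrac{\mu}{2}\xi^{2}\right)$, reads off the decay $O(\xi^{1-1/\mu})$ from the classical asymptotics of the confluent hypergeometric function, and obtains positivity from the Euler integral representation $\int_{0}^{1}e^{zt}t^{\alpha-1}(1-t)^{\beta-\alpha-1}\,\mathrm{d}t$, which applies precisely because $\tfrac32=\beta>\alpha=\tfrac{1}{2\mu}>0$ when $\mu>\tfrac13$. Your reduction of \eqref{ss1} to Weber's equation is correct (I checked the potential $V(\xi)=\tfrac{\mu^{2}\xi^{2}}{4}+\tfrac{3\mu}{2}-1$ and the parameter $a=\tfrac32-\tfrac1\mu$), and the existence, decay and $O(\xi^{1-1/\mu})$ statements follow soundly from the parabolic-cylinder asymptotics; the remark about $D_{\nu}(0)$ is not even needed for the stated $O$-bound, since both basis solutions satisfy it.

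The gap is in the positivity argument for $\mu\in(\tfrac13,\tfrac23)$, in the last sentence. Sturm comparison does give $w_\mu>0$ on $(0,\xi_{*}]$, but ``beyond $\xi_{*}$ the potential $V$ is positive, so convexity again rules out sign changes'' does not follow. Convexity of $w_\mu$ on the set where $w_\mu>0$ propagates positivity only if you also know $w_\mu'(\xi_{*})\geq 0$ (in the case $\mu\geq\tfrac23$ you had the analogous input $w_\mu'(0)=1>0$ from the start); a positive convex function with negative slope can perfectly well decrease through zero, and $w''=Vw$ with $V>0$ small does not prevent this. On $(0,\xi_{*})$ the potential is negative, so $w_\mu$ is concave there and $w_\mu'$ decreases from $1$, and nothing in your argument controls its value at $\xi_{*}$. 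In fact the needed sign fails at the nearby endpoint: for $\mu=\tfrac13$ one has $w(\xi)=e^{\xi^{2}/12}\,\xi e^{-\xi^{2}/6}=\xi e^{-\xi^{2}/12}$ and $\xi_{*}=3\sqrt{2}$, so $w'(\xi_{*})=\bigl(1-\tfrac{\xi_{*}^{2}}{6}\bigr)e^{-\xi_{*}^{2}/12}=-2e^{-3/2}<0$; by continuous dependence on $\mu$ the same holds for $\mu$ slightly above $\tfrac13$. So the implicit hypothesis of your convexity step is actually false on part of the range, and the step must be replaced — either by a quantitative argument ruling out a zero of a convex solution with the actual values of $w_\mu(\xi_{*})$, $w_\mu'(\xi_{*})$ against the growth of $V$, or by a structurally different proof of positivity such as the paper's integral representation of ${}_{1}F_{1}$.
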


\begin{proof}
Let $\mu\in \left[ \frac{1}{3},1\right)$. We look for an
analytic solution to \eqref{ss1}, of the form
\begin{equation*}
f_\mu(\xi )=\sum_{k=0}^{+\infty}b_{k}\xi ^{k},
\end{equation*}%
where $b_k\in\R$ for all $k\in\N$. The boundary condition in $x=0$ implies
that $f_\mu(0)=b_0=0$. From \eqref{ss1}, we obtain
\begin{equation*}
\sum_{k=0}^{+\infty}\left[ (\mu-1) b_k - \mu k b_k - (k+2)(k+1) b_{k+2}%
\right] \xi ^{k} = 0.
\end{equation*}
Thus, for any $k\in \N$, it holds
\begin{equation*}
b_{k+2} = \frac{\mu - 1 - \mu k}{(k+1)(k+2)}b_k.
\end{equation*}
In particular, the condition $b_0=0$ implies that $b_{2n}=0$ for all $n\in\N$%
. Thus, denoting $w_n:=b_{2n+1}$ we can rewrite $f$ as
\begin{equation*}
f_\mu(\xi )=\sum_{n=0}^{+\infty}w_n\xi ^{2n+1},
\end{equation*}%
where $(w_n)_{n\in\N}$ satisfy the following relation (since $\mu>0$): 
\begin{equation*}
a_{n+1}=-\frac{\mu }{2}\frac{n+\frac{1}{2\mu }}{\left( n+\frac{3}{2}\right)
(n+1)}w_n
\end{equation*}%
and hence%
\begin{equation*}
w_n=(-1)^{n}\left( \frac{\mu }{2}\right) ^{n}\frac{\Gamma \left( \frac{3}{2%
}\right) \Gamma (1)}{\Gamma \left( \frac{1}{2\mu }\right) }\frac{\Gamma
\left( n+\frac{1}{2\mu }\right) }{\Gamma \left( n+\frac{3}{2}\right) \Gamma
(n+1)}a_{0}.
\end{equation*}


The solution can be written in terms of classical hypergeometric confluent
functions $_{1}F_{1}$:%
\begin{eqnarray*}
f_\mu(\xi ) =\xi\; _{1}F_{1}\left( \frac{1}{2\mu },\frac{3}{2};-\frac{\mu }{2%
}\xi ^{2}\right) =\xi e^{-\frac{\mu}{2}\xi ^{2}}\phantom{.}_{1}F_{1}\left(
\frac{3}{2}-\frac{1}{2\mu },\frac{3}{2}; \frac{\mu }{2}\xi ^{2}%
\right) .
\end{eqnarray*}%
In the particular case $\mu =\frac{1}{3}$ one has%
\begin{equation*}
f_{\frac{1}{3}}(\xi )=\xi e^{-\frac{1}{6}\xi ^{2}}
\end{equation*}%
while, for $\mu \in (\frac{1}{3},1)$  (cf. \cite{MR1225604} formula 13.7.1)%
\begin{equation*}
\phantom{.}_{1}F_{1}\left( \frac{1}{2\mu },\frac{3}{2};-\frac{\mu }{2}\xi
^{2}\right) \sim \frac{\Gamma \left( \frac{3}{2}\right) }{\Gamma \left( %
\frac{3}{2}-\frac{1}{2\mu}\right) }\left( \frac{\mu }{2}\xi
^{2}\right) ^{-\frac{1}{2\mu }}\text{ as }\xi \rightarrow \infty ,
\end{equation*}%
that is
\begin{equation*}
f_\mu(\xi )=O(\xi ^{1-\frac{1}{\mu }}).
\end{equation*}

The positivity of $f_\mu(\xi )$ follows from the positivity of the integrand
in the following representation formula for the confluent hypergeometric
function of the first kind:%
\begin{equation*}
\phantom{.}_1F_{1}\left( \alpha,\beta;z\right) =\frac{\Gamma (
\beta)}{\Gamma (\beta-\alpha)\Gamma ({\alpha})}%
\int_{0}^{1}e^{zt}t^{\alpha-1}(1-t)^{ \beta-\alpha-1}\,\mathrm{%
d} t
\end{equation*}%
with $\alpha=\frac{1}{2\mu}$, $\beta=\frac{3}{2}$. This concludes the proof
of the Lemma.
\end{proof}

\begin{rem}
For $\mu<\frac{1}{3}$, $f_\mu(\xi )$ is not positive and has a zero, $%
\xi^0_\mu$, that comes from infinity as $\mu$ decreases and approaches $\xi^0
_{0}(0)=\pi $ (since $f_0(\xi )=\sin (\xi) $ for $\mu=0$).
\end{rem}

 Proposition \ref{lem:solf} thus provides us with a family of
solutions $f_\mu$ to equation \eqref{ss1}, for $\mu\in \left[\frac{1}{3}%
,1\right)$. Recall that by definition, the solution to \eqref{eq:cont1}-%
\eqref{eq:cont3} must have finite mass. The relation between $u$ and $f_\mu$
given by \eqref{eq:selfsimsol} implies that for all $t\in\R_+$,
\begin{equation*}
\int_{\R_+} u(t,x) \mathrm{d}x = t^{2\mu-1} \left(\int_{\R_+} f_\mu\left(\xi\right)
d\xi\right)^2.
\end{equation*}
From Lemma \ref{lem:solf}, $f_\mu$ is not integrable for any $\mu\in \left(%
\frac{1}{3},1\right)$, which means that the only admissible solution to %
\eqref{ss1} giving a self-similar solution to \eqref{eq:cont1}-%
\eqref{eq:cont3} with finite mass is given by $\mu=\frac{1}{3}$.

We then postulate that the self-similar solution $f_{\frac{1}{3}}$ for $\mu =%
\frac{1}{3}$ is an attractor, in the sense that solutions tend to the
self-similar solution in a suitable norm as $t\rightarrow \infty $, for all
initial data that decay sufficiently fast.

The remainder of this article aims to prove that this is indeed the case. We
begin by defing a quantity that plays an important role in the definition of
$u$ and in the analysis of its asymptotic behavior. Given a solution $u$ to %
\eqref{eq:cont1}-\eqref{eq:cont3}, and its first moment $M:t\rightarrow M(t)$%
, we define
\begin{equation}  \label{eq:a}
a(t):= \int_0^t M(s)\,\mathrm{d} s.
\end{equation}

\color{black}

The question now is the identification of $a(t)$ given by \eqref{eq:a}. Note
that from \eqref{eq:usol},
\begin{equation*}
u({t,x})=\frac{1}{{2}\sqrt{\pi a(t)}}\int_{0}^{+\infty }\left( e^{-\frac{%
(x-s)^{2}}{4a(t)}}-e^{-\frac{(x+s)^{2}}{4a(t)}}\right) u^{\mathrm{in}} (s)\,%
\mathrm{d} s
\end{equation*}%
so that%
\begin{equation*}
\partial_x u({t,0})=\frac{1}{{2}\sqrt{\pi }}a^{-\frac{3}{2}}(t)\int_{0}^{+\infty
} se^{-\frac{s^{2}}{{4} a(t)}}u^{\mathrm{in}}(s)\,\mathrm{d} s.
\end{equation*}%
Integrating Equation \eqref{p:reg} in $\mathbb{R}_{+}$, as seen in the proof of Proposition
\ref{p:mass}, it holds
\begin{equation*}
\frac{\mathrm{d} M(t)}{\mathrm{d} t}=-M(t)\partial_x u({t,0}),
\end{equation*}%
which allows us to conclude that $a(t)$ satisfies the following integro-differential
equation:%
\begin{equation}
a^{\prime \prime }(t)=-a^{\prime }(t)\frac{1}{{2}\sqrt{\pi }}a^{-\frac{3}{2}%
}(t)\int_{0}^{+\infty } se^{-\frac{s^{2}}{{4}a(t)}}u^{\mathrm{in}}(s)\,\mathrm{%
d} s.  \label{intdif}
\end{equation}%
We define now
\begin{equation*}
G(a):=\frac{1}{{2}\sqrt{\pi }}a^{-\frac{3}{2}}\int_0^{+\infty} se^{-\frac{s^{2}}{{4}a}}u^{\mathrm{in}}(s)\,\mathrm{d} s.
\end{equation*}
The quantity $G(a)$ is bounded provided that $u^{\mathrm{in}}$ is linear at
the origin and has its first moment $M_1(0)$ bounded.

\begin{lemma}
\label{lem:a} Let $u^{\mathrm{in}}\in L^1(\R_+)\cap L^\infty(\R_+)$ a positive and 
admissible initial condition.
The
integro-differential equation \eqref{intdif}, with initial condition $a(0)=0$ and
 $a^{\prime }(0)=M(0)$, has a solution $a:\,\R_+\to \R$ such
that%
\begin{equation*}
a(t)\sim\left( {\frac{3}{2\sqrt{\pi}}}M_{1}\right) ^{\frac{2}{3}}t^{\frac{2}{%
3}}\text{ as }t\rightarrow \infty
\end{equation*}%
and
\begin{equation*}
a^{\prime }(t)\sim\frac{2}{3}\left( {\frac{3}{2\sqrt{\pi}}}M_{1}\right) ^{%
\frac{2}{3}}t^{-\frac{1}{3}}\text{ as }t\rightarrow \infty.
\end{equation*}
\end{lemma}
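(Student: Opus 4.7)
My plan is to reduce equation \eqref{intdif} to a scalar first-order autonomous ODE by integrating once. Since $a'(t)>0$, dividing $a''=-a' G(a)$ by $a'$ yields $(\log a')' = -G(a)$, and viewing $a'$ as a function of $a$ along the trajectory gives the separable relation
\begin{equation*}
\frac{\mathrm{d}a'}{\mathrm{d}a} = -G(a).
\end{equation*}
Integrating from $a=0$, where $a'=M(0)$, produces the closed-form
\begin{equation*}
a'(t) = M(0) - \int_{0}^{a(t)} G(\alpha)\,\mathrm{d}\alpha.
\end{equation*}

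The crucial step is to evaluate $\int_{0}^{+\infty} G(\alpha)\,\mathrm{d}\alpha$. By Fubini--Tonelli (all integrands are non-negative) and the elementary identity
\begin{equation*}
\int_{0}^{+\infty} \alpha^{-3/2} e^{-s^{2}/(4\alpha)}\,\mathrm{d}\alpha = \frac{2\sqrt{\pi}}{s},
\end{equation*}
obtained through the change of variables $u=s/(2\sqrt{\alpha})$, I get the key cancellation $\int_{0}^{+\infty} G(\alpha)\,\mathrm{d}\alpha = \int_{0}^{+\infty} \uin(s)\,\mathrm{d}s = M(0)$. This rewrites the equation as $a'(t)=H(a(t))$ with $H(a):=\int_{a}^{+\infty}G(\alpha)\,\mathrm{d}\alpha$, a function that is continuous and strictly decreasing on $[0,+\infty)$, with $H(0)=M(0)>0$ and $H(a)\to 0$ as $a\to+\infty$. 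Existence, uniqueness and unboundedness of $a:\R_+\to\R_+$ then follow from the inversion of the separated formula $t=\int_{0}^{a(t)} \mathrm{d}\alpha/H(\alpha)$; in particular $a(t)\to+\infty$ as $t\to+\infty$, since otherwise $a'$ would stay bounded away from $0$.

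For the asymptotics it then suffices to extract the leading behavior of $H(a)$ at infinity. By dominated convergence (dominant $s\,\uin(s)\in L^{1}(\R_+)$, by admissibility),
\begin{equation*}
\int_{0}^{+\infty} s\, e^{-s^{2}/(4\alpha)}\,\uin(s)\,\mathrm{d}s \xrightarrow[\alpha\to+\infty]{} M_{1},
\end{equation*}
so $G(\alpha)\sim \frac{M_{1}}{2\sqrt{\pi}}\alpha^{-3/2}$, and hence $H(a)\sim \frac{M_{1}}{\sqrt{\pi}} a^{-1/2}$ as $a\to+\infty$. Plugging this into $a'=H(a)$ and noting that $(a^{3/2})' = \tfrac{3}{2} a^{1/2}\, a' \to \tfrac{3 M_{1}}{2\sqrt{\pi}}$ gives $a(t)^{3/2}\sim \frac{3 M_{1}}{2\sqrt{\pi}} t$, i.e. the first claimed equivalent; the second then follows immediately from $a'(t)=H(a(t))\sim \frac{M_{1}}{\sqrt{\pi}} a(t)^{-1/2}$ and a direct simplification.

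The main non-routine ingredient is the identity $\int_{0}^{+\infty} G(\alpha)\,\mathrm{d}\alpha = M(0)$: without this exact cancellation one would only conclude $a'(t)\to M(0) - \int_{0}^{+\infty} G > 0$, and the $t^{-1/3}$ decay (together with the self-similar exponent $\mu = 1/3$) would fail to appear. This identity is, morally, the asymptotic manifestation of the mass structure built into the original problem; once it is available, what remains is a routine asymptotic analysis of a scalar separable ODE.
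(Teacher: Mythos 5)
Your proof is correct and follows essentially the same route as the paper: integrate \eqref{intdif} once to get $a'(t)=M(0)-\int_0^{a(t)}G$, establish the key cancellation $\int_0^{+\infty}G(\alpha)\,\mathrm{d}\alpha=M(0)$ via Fubini and the identity $\int_0^{+\infty}\alpha^{-3/2}e^{-s^2/(4\alpha)}\,\mathrm{d}\alpha=2\sqrt{\pi}/s$, deduce $a(t)\to+\infty$, and then use $G(\alpha)\sim\frac{M_1}{2\sqrt{\pi}}\alpha^{-3/2}$. Your final step is in fact slightly tighter than the paper's: where the paper substitutes the ansatz $a\sim ct^{2/3}$ into the asymptotic second-order relation and matches coefficients, you work with the first-order form $a'=H(a)$ and observe $(a^{3/2})'\to\frac{3M_1}{2\sqrt{\pi}}$, which yields the power law without presupposing it.
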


\begin{proof}
Since
\begin{equation*}
a^{\prime \prime }(t)=-\frac{d}{\,\mathrm{d} t}\int_{0}^{a(t)}\left( \frac{1%
}{{2}\sqrt{\pi }}(a^{\ast})^{-{3}/{2}}\int_{0}^{+\infty}se^{-\frac{s^{2}}{%
4a^{\ast }}}u^{\mathrm{in}}(s)\,\mathrm{d} s\right) \,\mathrm{d} a^{\ast }
\end{equation*}%
Integrating once in time and using that $a^{\prime }(0)=M({0})$, it holds%
\begin{equation*}
a^{\prime }(t)+\int_{0}^{a(t)}\left( \frac{1}{{2}\sqrt{\pi }}(a^{\ast})^{-{3}/{%
2}}\int_{0}^{+\infty}se^{-\frac{s^{2}}{4a^{\ast }}}u^{\mathrm{in}}(s)\,%
\mathrm{d} s\right)\, \mathrm{d} a^{\ast }=M(0),
\end{equation*}%
which we rewrite as 
\begin{equation}  \label{eq:aF}
a^{\prime }(t)+F(a(t))=M({0}), 
\end{equation}%
denoting $F(a):=\int_{0}^{a}G(a^\ast)$.
But now%
\begin{equation*}
\begin{split}
F(a)=&\int_{0}^{a}G(a^\ast)\,\mathrm{d} a^\ast =\int_{0}^{a}\left( \frac{1}{{2}\sqrt{\pi }%
}(a^{\ast})^{-{3}/{2}}\int_{0}^{+\infty}se^{-\frac{s^{2}}{4a^{\ast }}}u^{%
\mathrm{in}}(s)\,\mathrm{d} s\right) \, \mathrm{d} a^{\ast } \\
=& \frac{1}{{2}\sqrt{\pi }}\int_{0}^{+\infty}\left( \int_{0}^{a}(a^{\ast})^{-{3%
}/{2}}e^{-\frac{s^{2}}{4a^{\ast }}}\, \mathrm{d} a^{\ast }\right) su^{\mathrm{in}}(s)\,%
\mathrm{d} s =\frac{1}{{2}\sqrt{\pi }}\int_{0}^{+\infty}\left(
\int_{0}^{as^{-2}}(a^{\ast})^{-{3}/{2}}e^{-\frac{1}{4a^{\ast }}}\, \mathrm{d} a^{\ast
}\right) u^{\mathrm{in}}(s)\,\mathrm{d} s
\end{split}%
\end{equation*}%
and, since $\int_{0}^{+\infty}a^{-\frac{3}{2}}e^{-\frac{1}{4a}}da={2}\sqrt{\pi}$, we obtain
\begin{equation*}
\lim_{a^\star\rightarrow+\infty} F(a^\star )=M({0}).
\end{equation*}%
We conclude that as $t$ tends to infinity, if $a(t)\rightarrow +\infty $,
then $a^{\prime }(t)\rightarrow 0$.

{ Notice that from its definition, $a$ is an increasing
function, hence it has a limit when $t$ goes to infinity. Let $%
a_\infty:=\lim_{t\rightarrow+\infty} a(t)$,  and suppose that $%
a_\infty<+\infty$. Then $\lim_{t\rightarrow+\infty} a^\prime(t) = 0$, from
which we get $F(a_\infty)=M(0)$. However, $F(a)$ is the primitive of a
strictly positive function and hence is strictly growing as a function of $a$%
, which contradicts $\lim_{a^\star\rightarrow+\infty} F(a^\star )=M({0})$. }

We then conclude that $a(t)\rightarrow +\infty $ as $t\rightarrow +\infty $.
Then, as $a(t)\rightarrow +\infty $, $G(a(t))\sim \frac{1}{{2}\sqrt{\pi }}%
a(t)^{-\frac{3}{2}}M_{1} $ and
\begin{equation*}
a^{\prime \prime }(t)\sim -a^{\prime }(t)a^{-\frac{3}{2}}(t)\frac{1}{{2}\sqrt{\pi }}M_{1},
\end{equation*}%
so that $a(t)\sim ct^{\frac{2}{3}}\text{ as }t\rightarrow \infty $, with $-%
\frac{2}{9}c=-\frac{{1}}{3}c^{-\frac{1}{2}}\sqrt{\frac{1}{\pi }}M_{1}$, that
is
\begin{equation*}
c=\left( {\frac{3}{2\sqrt{\pi}}}M_{1}\right) ^{\frac{2}{3}}.
\end{equation*}%
This concludes the proof of the Lemma.
\end{proof}

On the other hand, if $u^{\mathrm{in}}$ \textit{does not have its first
moment bounded} but%
\begin{equation*}
u^{\mathrm{in}}(x)=O(x^{-\delta })\text{, as }x\rightarrow +\infty ,\quad 1%
\mathit{<\delta <2}
\end{equation*}%
\ then%
\begin{equation*}
G(a)=
\frac{1}{2\sqrt{\pi }}
a^{-\frac{3}{2}}\int_{{0}}^{{+\infty}} 
\!\!\!\!
se^{-\frac{s^{2}}{{4}a}}u^{%
\mathrm{in}}(s)\,\mathrm{d}s\sim 
{\frac{1}{\sqrt{\pi }}}
a^{-\frac{1}{2}}\int_{{0}}^{{+\infty}} 
\!\!\!\!
se^{-\frac{s^{2}}{2}}u^{\mathrm{in}}({\sqrt{2}}a^{\frac{1}{2}}s)\,\mathrm{d}s\sim Ca^{-
\frac{1}{2}-\frac{\delta }{2}}\int_{0}^{+\infty }
\!\!\!\!
e^{-\frac{s^{2}}{2}%
}s^{1-\delta }\,\mathrm{d}s
\end{equation*}%
and hence,%
\begin{equation*}
a^{\prime \prime }(t)\sim -Ca^{\prime }a^{-\frac{1}{2}-\frac{\delta }{2}}
\end{equation*}%
implying%
\begin{equation*}
a(t)=O(t^{\frac{2}{\delta+1 }})
\end{equation*}%
so that
\begin{equation*}
\mu =\delta +1.
\end{equation*}
 Lemma \ref{lem:a} thus gives us the asymptotic behavior of $a$
as $t$ goes to infinity. It has two consequences. In Proposition \ref%
{p:unorm}, we prove that the $L^{\infty }$-norm of the solution to %
\eqref{eq:cont1}-\eqref{eq:cont3} decays like $t^{-\frac{2}{3}}$. We can
then compare the asymptotic behavior of $u$ to that of the candidate 
self-similar profile.
Proposition \ref{p:profilenorm} shows that this profile indeed also decays like $t^{-\frac{2}{3}}$.

\begin{prop}
\label{p:unorm} Let $u^{\mathrm{in}}\in L^1(\R_+)\cap L^\infty(\R_+)$ be a
positive and admissible initial condition.
Then for all $x\in\R_+$,
\begin{equation}
u({t,x}) \leq \frac{M_1}{\sqrt{2 e\,\pi }\, a(t)} \sim \frac{C M_1^{\frac{1}{3}%
}}{t^{\frac{2}{3}}} \quad \text{ as } t\rightarrow+\infty,
\end{equation}
where the value of $C$ can be computed explicitely and does not depend on
the initial data.
\end{prop}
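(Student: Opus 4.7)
The plan is to start from the semi-explicit representation of $u$ given in Proposition \ref{p:reg}, which can be rewritten in terms of $a(t)=\int_0^t M(s)\,\mathrm{d}s$ as
\[
u(t,x)=\frac{1}{2\sqrt{\pi a(t)}}\int_0^{+\infty}\left(e^{-(x-y)^2/(4a(t))}-e^{-(x+y)^2/(4a(t))}\right)u^{\mathrm{in}}(y)\,\mathrm{d}y,
\]
and to extract from the kernel a factor of $y$ uniformly in $x$, so that the integral becomes controlled by the first moment $M_1$.

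The key step is to write the kernel as a derivative integral. Since
\[
\frac{\mathrm{d}}{\mathrm{d}s}\left[e^{-(x-s)^2/(4a)}\right]=\frac{x-s}{2a}\,e^{-(x-s)^2/(4a)},
\]
the fundamental theorem of calculus gives
\[
e^{-(x-y)^2/(4a)}-e^{-(x+y)^2/(4a)}=\int_{-y}^{y}\frac{x-s}{2a}\,e^{-(x-s)^2/(4a)}\,\mathrm{d}s.
\]
Bounding the integrand by the supremum over $z=x-s\in\R$ of $\frac{|z|}{2a}e^{-z^2/(4a)}$, which is attained at $|z|=\sqrt{2a}$ with value $\frac{1}{\sqrt{2ea}}$, yields
\[
0\leq e^{-(x-y)^2/(4a)}-e^{-(x+y)^2/(4a)}\leq \frac{2y}{\sqrt{2ea(t)}},
\]
independently of $x\geq 0$. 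Inserting this into the representation for $u$ and using the admissibility of $u^{\mathrm{in}}$ (so that $\int_0^{+\infty}y\,u^{\mathrm{in}}(y)\,\mathrm{d}y=M_1$) gives
\[
u(t,x)\leq \frac{1}{2\sqrt{\pi a(t)}}\cdot\frac{2M_1}{\sqrt{2ea(t)}}=\frac{M_1}{\sqrt{2e\pi}\,a(t)},
\]
which is exactly the claimed non-asymptotic bound.

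The asymptotic statement then follows immediately from Lemma \ref{lem:a}, which provides $a(t)\sim\bigl(\frac{3}{2\sqrt{\pi}}M_1\bigr)^{2/3}t^{2/3}$ as $t\to+\infty$. Dividing through, the $M_1^{2/3}$ combines with $M_1$ in the numerator to yield $M_1^{1/3}$, with a universal constant $C=\bigl(\sqrt{2e\pi}\bigr)^{-1}\bigl(\frac{3}{2\sqrt{\pi}}\bigr)^{-2/3}$ that depends on neither $u^{\mathrm{in}}$ nor $t$. I do not anticipate a serious obstacle here; the only delicate point is choosing the right way to linearize the difference of Gaussians so that a $y$ appears and $x$ is eliminated uniformly, and the derivative representation above does exactly that. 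The rest is direct substitution and an appeal to Lemma \ref{lem:a}.
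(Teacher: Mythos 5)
Your proof is correct and follows essentially the same route as the paper: the paper also writes the difference of Gaussians as $\int_{-s}^{s} g_a'(\xi)\,\mathrm{d}\xi$ with $g_a(\xi)=e^{-(x-\xi)^2/(4a)}$, bounds the derivative by $(2ea)^{-1/2}$ via $|ze^{-z^2}|\leq(2e)^{-1/2}$, and then invokes Lemma \ref{lem:a}. Nothing to add.
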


\begin{proof}
From Equation \eqref{eq:usol},
\begin{equation*}
u({t,x})=\frac{1}{{2}\sqrt{\pi a(t)}}\int_{0}^{+\infty } f_{a(t)}(s) u^{\mathrm{%
in}} (s)\,\mathrm{d} s,
\end{equation*}
where
\begin{equation*}
f_{a}(s) := e^{-\frac{(x-s)^{2} }{4a}}-e^{-\frac{(x+s)^{2}}{4a}}.
\end{equation*}
Notice that
\begin{equation*}
f_{a}(s) = g_a(s)-g_a(-s) = \int_{-s}^s g_a^\prime(\xi)\, \mathrm{d}\xi,
\end{equation*}
where $g_a(\xi) = e^{-\frac{(x-\xi)^{2} }{4a}}.$ One easily sees that
\begin{equation*}
g_a^\prime(\xi) = \frac{1}{\sqrt{a}}\left( \frac{x-\xi }{\sqrt{4a}}e^{-\frac{%
(x-\xi)^{2} }{4a}} \right) \leq \frac{1}{\sqrt{2 e\, a}},
\end{equation*}
in which we used the property: $|ze^{-z^2}|\leq (2e)^{-\frac{1}{2}}$ for all
$z\in\R_+$. Hence, $f_a(s)\leq \frac{2s}{\sqrt{2 e\, a}}$, which implies
that
\begin{equation*}
u({t,x}) \leq \frac{1}{\sqrt{2 e\,\pi }\, a(t)} M_1.
\end{equation*}
Lemma \ref{lem:a} allows us to conclude.
\end{proof}

\begin{prop}
\label{p:profilenorm} Let $u^{\mathrm{in}}\in L^1(\R_+)\cap L^\infty(\R_+)$
be a positive {and admissible initial condition}.
Let $u$ be the solution to %
\eqref{eq:cont1}-\eqref{eq:cont3}, and let $$a:t\mapsto \int_0^{+\infty}
u({t,x})\,\mathrm{d}x.$$ Then for all $x\in\R_+$,
\begin{equation}
\frac{M_{1}x}{{2}\sqrt{\pi }a^{\frac{3}{2}}(t)}e^{-\frac{x^{2}}{4a(t)}} \sim
\frac{C}{t^{\frac{2}{3}}} \quad \text{ as } t\rightarrow+\infty,
\end{equation}
for some positive constant $C$.
\end{prop}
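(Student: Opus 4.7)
The plan is to reduce the claim to a direct substitution of the asymptotic expansion of $a(t)$ furnished by Lemma~\ref{lem:a}. Indeed, the expression under study is purely explicit in terms of $a(t)$, and all the $t$-dependence enters through the single scalar $a(t)$. Since $a(t)\to +\infty$ as $t\to+\infty$, the exponential factor $e^{-x^2/(4a(t))}$ tends to $1$ for each fixed $x$, so the rate of decay is governed by the algebraic prefactor $a(t)^{-3/2}$ together with the chosen scaling of $x$. The statement should therefore be read in the self-similar variable $\xi := x/t^{1/3}$ (equivalently $\eta := x/\sqrt{a(t)}$), since Proposition~\ref{p:unorm} and the ansatz of Section~\ref{s:self} anticipate the scale $x\sim t^{1/3}$.

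Concretely, I would proceed as follows. First, recall from Lemma~\ref{lem:a} that
\begin{equation*}
a(t)\sim c_0\, t^{2/3},\qquad c_0:=\left(\tfrac{3 M_1}{2\sqrt{\pi}}\right)^{2/3}\!,
\end{equation*}
so $a(t)^{3/2}\sim c_0^{3/2}\, t = \tfrac{3 M_1}{2\sqrt{\pi}}\, t$. Then, substituting $x=\xi\, t^{1/3}$ with $\xi>0$ fixed into the expression $\frac{M_1 x}{2\sqrt{\pi}\,a(t)^{3/2}}\,e^{-x^2/(4a(t))}$ and using $x^2/(4a(t))\to \xi^2/(4c_0)$ gives
\begin{equation*}
\frac{M_1\, x}{2\sqrt{\pi}\,a(t)^{3/2}}\,e^{-x^2/(4a(t))}\sim \frac{M_1\,\xi}{2\sqrt{\pi}\, c_0^{3/2}}\, e^{-\xi^2/(4 c_0)}\, t^{-2/3},
\end{equation*}
which yields the claimed rate with a positive constant $C=C(\xi)$.

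As a robustness check (and a natural companion to Proposition~\ref{p:unorm}, which controls the $L^\infty$ norm), I would alternatively maximize in $x$: the map $x\mapsto x\,e^{-x^2/(4a)}$ attains its maximum at $x=\sqrt{2a}$ with value $\sqrt{2a}\, e^{-1/2}$, so
\begin{equation*}
\sup_{x\in\R_+}\frac{M_1 x}{2\sqrt{\pi}\, a(t)^{3/2}}\, e^{-x^2/(4a(t))}=\frac{M_1}{\sqrt{2\pi e}\, a(t)}\sim \frac{M_1}{\sqrt{2\pi e}\, c_0}\, t^{-2/3},
\end{equation*}
again producing the $t^{-2/3}$ rate. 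Either interpretation gives the conclusion.

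I do not anticipate any substantive obstacle: once Lemma~\ref{lem:a} is granted, the statement is a one-line substitution, and the only delicate point is to clarify in which sense ``$\sim C/t^{2/3}$ for all $x\in\R_+$'' is meant (pointwise in the self-similar variable, or as an $L^\infty$-amplitude). Both readings reduce to the computations above, and both show that the candidate self-similar profile exhibits the same $t^{-2/3}$ decay rate as the true solution, thereby justifying it as the attractor announced at the start of Section~\ref{s:self}.
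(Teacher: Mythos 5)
Your proposal is correct and its second computation (maximizing $x\mapsto x\,e^{-x^{2}/(4a)}$ at $x=\sqrt{2a}$ and invoking Lemma~\ref{lem:a}) is essentially the paper's own proof, which bounds $\frac{x}{2\sqrt{a}}e^{-x^{2}/(4a)}\leq (2e)^{-1/2}$ and then substitutes $a(t)\sim c_{0}t^{2/3}$. Your preliminary remark that the literal pointwise-in-$x$ reading would give a $t^{-1}$ rate, so the statement must be understood in the self-similar variable or as an $L^{\infty}$ amplitude, is a fair and accurate clarification of what the proposition actually asserts.
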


\begin{proof}
For all $x\in\R_+$,
\begin{equation*}
\frac{M_{1}x}{2\sqrt{\pi }a^{\frac{3}{2}}(t)}e^{-\frac{ x^{2}}{4a(t)}} =
\frac{M_{1}}{\sqrt{\pi }a(t)}\frac{x}{2\sqrt{a(t) }}e^{-\left(\frac{x}{\sqrt{%
4a(t)}}\right)^2} \leq \frac{M_{1}}{\sqrt{2\, e\,\pi }a(t)} \sim \frac{C}{t^{%
\frac{2}{3}}},
\end{equation*}
using the asymptotic behavior of $a$ shown in Lemma \ref{lem:a}.
\end{proof}

Propositions \ref{p:unorm} and \ref{p:profilenorm} show that the $L^\infty$%
-norms of the solution $u$ to \eqref{eq:cont1}-\eqref{eq:cont3} and of the
candidate self-similar solution 
decay with the
same order. In the final theorem of this article we show that 
$u$ asymptotically approaches the
self-similar solution as soon as the
second moment of the initial data is bounded.

\begin{theorem}
\label{th:convergence} If the initial data $u^{\mathrm{in}}$ has a bounded second
moment $M_{2}(0)$, then there exists $C>0$ such that
\begin{equation*}
\left\vert u({t,x})-M_{1}\frac{x}{{2}\sqrt{\pi }a^{\frac{3}{2}}(t)}e^{-\frac{%
x^{2}}{4a(t)}}\right\vert \leq \frac{CM_{2}(0)}{t}
\end{equation*}%
for $t>1$.
\end{theorem}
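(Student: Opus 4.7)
The strategy is to Taylor-expand the Gaussian kernel in the semi-explicit representation \eqref{eq:usol} around $s=0$: the first-order term will recover the conjectured self-similar profile (thanks to the conservation of $M_1$), and the remainder will be controlled by the bounded second moment. Note first that $M_2(0)<+\infty$ together with $\uin\in L^1(\R_+)$ implies by Cauchy--Schwarz that $\uin$ is admissible in the sense of Definition~\ref{hyp:firstmoment}, so Lemma~\ref{lem:a} is available.

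Setting $g_a(\xi):=e^{-(x-\xi)^2/(4a)}$, the bracket in \eqref{eq:usol} equals $g_{a(t)}(s)-g_{a(t)}(-s)$. I would use the Taylor identity
\begin{equation*}
g_a(s)-g_a(-s)=2s\,g_a'(0)+R_a(s),\qquad R_a(s)=\int_{-s}^{s}\!\!\int_0^{\xi}g_a''(\eta)\,\d\eta\,\d\xi.
\end{equation*}
Since $g_a'(0)=\tfrac{x}{2a}e^{-x^2/(4a)}$, dividing by $2\sqrt{\pi a(t)}$ and integrating the linear term against $s\,\uin(s)$ yields \emph{exactly} the candidate self-similar profile $\tfrac{M_1 x}{2\sqrt{\pi}\,a^{3/2}(t)}\,e^{-x^2/(4a(t))}$.

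For the remainder, a direct differentiation gives $g_a''(\eta)=\tfrac{1}{2a}(2z^2-1)e^{-z^2}$ with $z=(x-\eta)/(2\sqrt{a})$. Since $|2z^2-1|e^{-z^2}\leq 1$ for every $z\in\R$, one obtains the \emph{uniform} bound $|g_a''|\leq 1/(2a)$, independent of $x$ and $\eta$. A double integration then gives $|R_a(s)|\leq s^2/(2a)$, and hence
\begin{equation*}
\left|u(t,x)-\frac{M_1 x}{2\sqrt{\pi}\,a^{3/2}(t)}\,e^{-x^2/(4a(t))}\right|\leq \frac{1}{2\sqrt{\pi a(t)}}\cdot\frac{1}{2a(t)}\int_0^{+\infty}\!\!s^2\,\uin(s)\,\d s=\frac{M_2(0)}{4\sqrt{\pi}\,a(t)^{3/2}}.
\end{equation*}

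To conclude, I would invoke Lemma~\ref{lem:a}: the asymptotic $a(t)\sim(\tfrac{3M_1}{2\sqrt{\pi}})^{2/3}t^{2/3}$ ensures that $a(t)^{3/2}/t$ admits a strictly positive finite limit as $t\to+\infty$, and by continuity and strict positivity of $a$ on $[1,+\infty)$ this ratio is bounded below there by a positive constant. This upgrades the previous bound into $CM_2(0)/t$ for all $t>1$. The only mild subtlety I anticipate is the passage from the asymptotic equivalence of $a(t)$ to a genuine non-asymptotic lower bound on $[1,+\infty)$; the uniform control of $g_a''$ itself is effortless thanks to the Gaussian factor dominating the polynomial regardless of $x$, and there is no need to track any cancellation beyond first order.
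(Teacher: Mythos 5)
Your proof is correct and follows essentially the same route as the paper: subtracting the first-order (linear-in-$y$) part of the kernel $e^{-(x-y)^2/4a}-e^{-(x+y)^2/4a}$ to produce the profile via conservation of $M_1$, and bounding the quadratic remainder by the second moment — your Taylor-remainder estimate $|R_a(s)|\leq s^2/(2a)$ is exactly the content of the paper's asserted bound $|\Phi(X,Y)|\leq CY$. You in fact supply two details the paper leaves implicit (the uniform bound on $g_a''$ behind ``it is simple to show,'' and the passage from $a^{-3/2}(t)$ to $t^{-1}$ via Lemma~\ref{lem:a} on all of $(1,\infty)$), so no changes are needed.
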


\begin{proof}
Since
\begin{equation*}
u({t,x})=\frac{1}{{2}\sqrt{\pi a(t)}}\int_{0}^{+\infty}\left( e^{-\frac{(x-y)^{2}%
}{4a(t)}}-e^{-\frac{(x+y)^{2}}{4a(t)}}\right) u^{\mathrm{in}}(y)\,\mathrm{d} y,
\end{equation*}%
denoting%
\begin{equation*}
v (x)=\frac{1}{{2}\sqrt{\pi a}}\int_{0}^{+\infty}\left( \frac{e^{-\frac{
(x-y)^{2}}{4a}}-e^{-\frac{(x+y)^{2}}{4a}}}{y}\right) yv^{\mathrm{in}}(y)\,\mathrm{d} y,
\end{equation*}
we have
\begin{eqnarray*}
&&v (x)-M_{1}\frac{x}{{2}\sqrt{\pi }a^{\frac{3}{2}}}e^{-\frac{x^{2}}{4a}} \\
&=&\frac{1}{{2}\sqrt{\pi a}}\int_{0}^{+\infty}\left( \frac{e^{-\frac{(x-y)^{2}%
}{4a}}-e^{-\frac{(x+y)^{2}}{4a}}}{y}-\frac{x}{a}e^{-\frac{x^{2}}{4a}}\right)
yv^{\mathrm{in}}(y)\,\mathrm{d}y.
\end{eqnarray*}%
We write now%
\begin{equation*}
\frac{e^{-\frac{(x-y)^{2}}{4a}}-e^{-\frac{(x+y)^{2}}{4a}}}{y}-\frac{x}{a}e^{-%
\frac{x^{2}}{4a}}\equiv \frac{1}{a^{\frac{1}{2}}}{\Phi}\left( \frac{x}{a^{\frac{1%
}{2}}},\frac{y}{a^{\frac{1}{2}}}\right)
\end{equation*}%
with%
\begin{equation*}
{\Phi}\left( X,Y\right) =\frac{e^{-\frac{(X-Y)^{2}}{4}}-e^{-\frac{(X+Y)^{2}}{4}}}{%
Y}-Xe^{-\frac{X^{2}}{4}}.
\end{equation*}

It is simple to show that there exists a constant $C$ such that%
\begin{equation*}
\left\vert {\Phi}\left( X,Y\right) \right\vert \leq CY
\end{equation*}%
so that%
\begin{equation*}
\left\vert v(x)-M_{1}\frac{x}{{2}\sqrt{\pi }a^{\frac{3}{2}}}e^{-\frac{x^{2}}{4a%
}}\right\vert \leq \frac{C}{a^{\frac{3}{2}}}\int_{0}^{+\infty}y^{2}u^{%
\mathrm{in}}(y)dy.
\end{equation*}
\end{proof}

{\ \color{blue} }

\begin{rem}
Note that the previous result can be rewritten as
\begin{equation*}
t^{2/3}\left\vert u({t,x})-M_{1}\frac{x}{{2}\sqrt{\pi }a^{\frac{3}{2}}(t)}e^{-%
\frac{x^{2}}{4a(t)}}\right\vert \leq \frac{CM_{2}(0)}{t^{1/3}},
\end{equation*}
which means that the convergence of the solution to the self-similar profile
takes place at a faster rate than the decay of their $L^\infty$-norms, which
is to be expected.
\end{rem}


\section{Numerical tests}
\label{s:num}

In this section we perform some numerical experiments in order to verify the
theoretical results obtained above.

At the numerical level, we worked with the finite space interval $[0,400]$,
which is sufficiently wide to minimize the boundary effects on the numerical
solution, especially for initial data having a fast decay when $x\rightarrow
+\infty $.

We have introduced a fixed space step $\Delta x>0$ and a time step $\Delta
t>0$. Then, we have divided the interval $[0,400]$ in $N$ sub-intervals of
measure $\Delta x=400/N$.

We have then used an explicit finite differences scheme where the diffusion
coefficient (i.e. the mass) at each time step is taken as the mass in the
previous time step. The method is stable under the standard stability
condition $\Delta t \leq M(0) (\Delta x)^2/2$.

\begin{figure}[t]
\includegraphics[width=.7\textwidth]{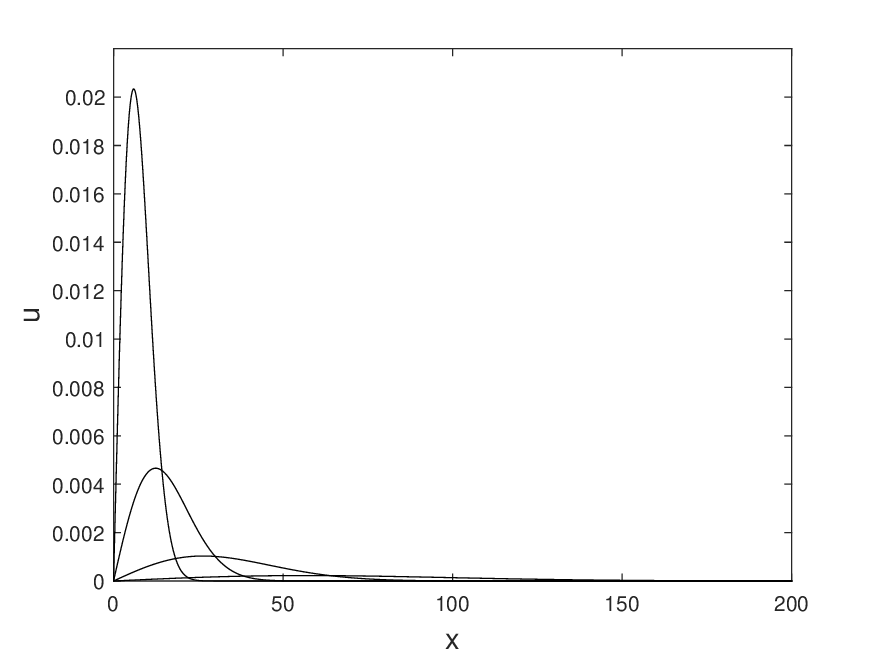}
\caption{Numerical profiles of the solution of \eqref{eq:cont1}-\eqref{eq:cont3} at times $t=50,\
500,\ 5000,\ 50000$, with initial
condition given in \eqref{e:ictest}.}
\label{fig1}
\end{figure}

\begin{figure}[t]
\includegraphics[width=.7\textwidth]{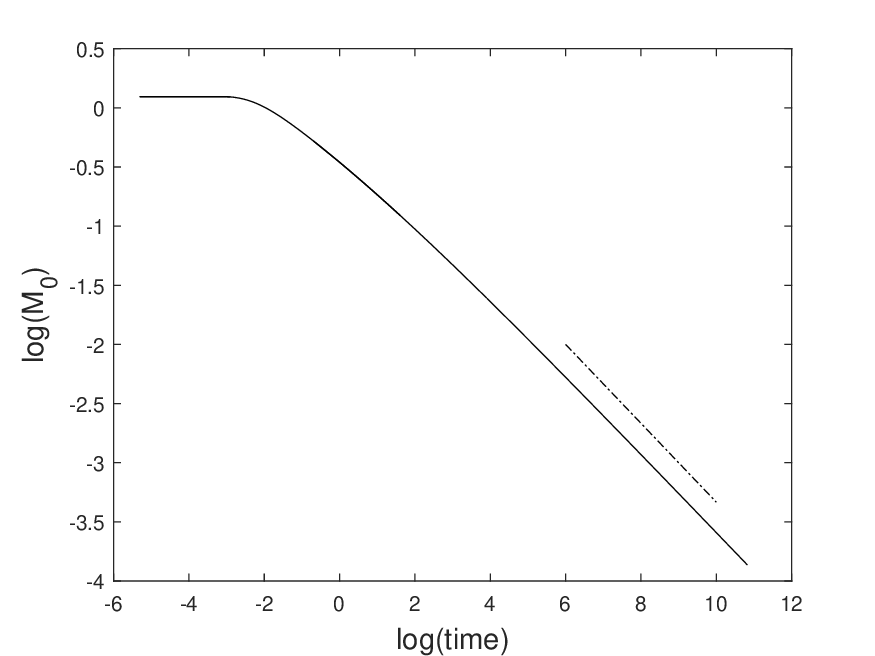}
\caption{Logarithm of the mass vs logarithm of time and comparison with a $-1/3$ slope (dotted-dashed line).}
\label{fig2}
\end{figure}

We have taken, as initial data, 
\begin{equation}
\label{e:ictest}
u_{0}(x)=\chi _{\lbrack 1,2]}=
\left\{
\begin{array}{ll}
1 & x\in \left[ 1,2\right] \\[6pt]
0 & \mathrm{otherwise}.
\end{array}
\right.
\end{equation}
In Figure \ref%
{fig1}, we plot the numerical approximation of the solution $u$ for $t=50,\
500,\ 5000,\ 50000$ and, in Figure \ref{fig2}, we show the time evolution of the
quantity $\log (M_{0}(t))$ (i.e. the logarithm of the mass of $u$). As we can see,
$\log (M_{0}(t))$ tends to follow a straight line with slope $-\frac{1}{3}$,
which indicates an asympotics of type $M_{0}(t)=O(t^{-1/3})$ as $%
t\rightarrow \infty $. Finally, in Figure \ref{fig3} we rescale the profiles in Figure \ref{fig1}
by multiplying them by $a(t)$ and representing them as a function of $x/a^{%
\frac{1}{3}}(t)$. As we can see, they approach the self-similar profile $%
f(\eta )=\frac{M_{1}}{\sqrt{4\pi }}\eta e^{-\frac{\eta ^{1}}{4}}$ (dashed
line).

\begin{figure}[t]
\includegraphics[width=.7\textwidth]{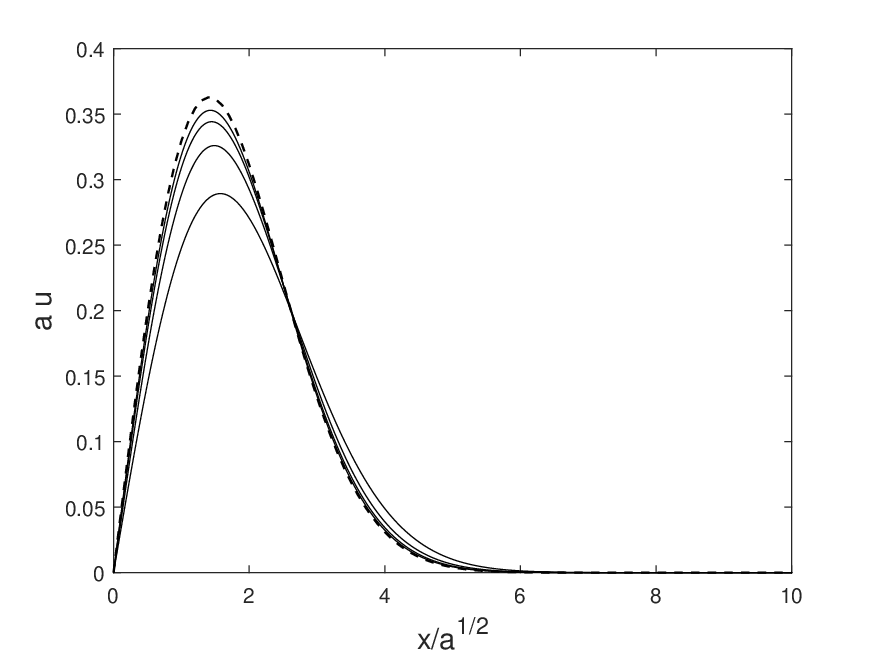}
\caption{Rescaled numerical profiles and comparison with the self-similar
profile (dashed line).}
\label{fig3}
\end{figure}


\section*{Appendix}


We consider here the case of the finite domain $\Omega=(0,\pi)$, with
homogeneous Dirichlet boundary conditions. This setting describes the
diffusive limit of the kinetic rock-paper-scissors game by supposing that
only individuals with wealth $x\in\Omega$ play the game, and can be deduced
from the kinetic model described in \cite{MR4128577} by adapting the same
arguments.

The problem studied in this Appendix is hence the following. We consider the
equation
\begin{equation}  \label{e:bdd1}
w_t=\left [\int_{0}^\pi w(t,\xi)\,\mathrm{d} \xi\right] w_{xx}, \qquad
(t,x)\in\R_+\times \Omega
\end{equation}
with initial data
\begin{equation}  \label{e:bdd2}
w(0,x)= w^{\text{in}}\in L^{2}( 0,\pi) \qquad
x\in\Omega
\end{equation}
and boundary conditions
\begin{equation}  \label{e:bdd3}
w(t,0)=w(t,\pi)=0, \qquad t\in\R_+,
\end{equation}
 where $w^{\text{in}}\geq 0$ for a.e. $x\in (0,\pi)$.
Note that, by parabolic theory 
\cite{MR0241822}, $w(t,x)\geq 0$. 

An explicit solution of \eqref{e:bdd1}-\eqref{e:bdd3}, when $w^{\text{in}%
}=M\sin(x)$, is the following:
\begin{equation*}
w^*(\textcolor{black}{t,x})=\frac{M}{2} \frac{\sin (x)}{1+Mt},\quad (t,x)\in\R_+\times \Omega,
\end{equation*}
where $M>0$ is a given constant. The function $w^*$ also turns out to be a
self-similar solution with the similarity exponent $\mu=0$. Its initial mass
is
\begin{equation*}
\int_{0}^\pi w^*(t,\xi)\,\mathrm{d} \xi = M.
\end{equation*}

We will show that, indeed, if the initial data $w^{\text{in}}$ is in $L^{2}(\Omega )$, the solution will tend to the explicit solution,
i.e. 
$$
w(t,x)\sim \frac{M\sin x}{2(1+Mt)} \text{ as } t\rightarrow + \infty,
$$
and the rate
of convergence is $\mathcal{O}(t^{-2})$. Clearly, $w\in C(\R%
_{+};L^{2}(\Omega ))$ and we can write $w(t,x)$ in terms of Fourier series
which, because of the boundary conditions, takes the form
\begin{equation}
w(t,x)=\sum_{n=1}^{+\infty }{w_n(t)}\sin (nx).  \label{wtx}
\end{equation}%
By simple inspection in \eqref{e:bdd1}-\eqref{e:bdd3}, we deduce that $w$
solves \eqref{e:bdd1}, with initial condition
\[
w^{\text{in}}=\sum_{n=1}^{+\infty }{w_n(0)}\sin (nx),
\]%
and boundary conditions \eqref{e:bdd3}. Let%
\[
M(t)\equiv \int_{0}^{\pi }w(t,x)dx.
\]%
Then%
\[
M(t)=\sum_{n\ \text{odd}}\frac{2w_n(t)}{n}
\]%
so that%
\[
\frac{dw_n}{dt}=-n^{2}M(t)w_n.
\]%
Hence%
\[
w_n(t)=w_n(0)e^{-n^{2}\int_{0}^{t}M(t^{\prime })dt^{\prime }},
\]%
and we can compute
\[
M(t)=\sum_{n\ \text{odd}}\frac{2w_n(0)}{n}e^{-n^{2}\int_{0}^{t}M(t^{\prime
})dt^{\prime }}.
\]%
Denoting, as before, 
$$a
(t)=\int_{0}^{t}M(t^{\prime })dt^{\prime },
$$ 
we have then the
ordinary differential equation
\[
a^{\prime }(t)=\sum_{n\ \text{odd}}\frac{2w_n(0)}{n}e^{-n^{2}a(t)}
\]%
so that, since $w(t,x)\geq 0 $ and hence $M(t)>0$, we can integrate explicitly to obtain%
\[
G(a)\equiv \int_{0}^{a}\frac{e^{a^{\prime }}}{{ \sum_{n\ \text{odd}}}\frac{2w_n(0)}{n%
}e^{(1-n^{2})a^{\prime }}}da^{\prime }=t.
\]%
Note that%
\begin{eqnarray*}
G(a) &=&\frac{1}{2w_1(0)}(e^{a}-1)-\frac{1}{2w_1(0)}\int_{0}^{a}\frac{%
\sum_{n=3,5,...}\frac{w_n(0)}{nw_1(0)}e^{(2-n^{2})a^{\prime }}}{%
1+\sum_{n=3,5,...}\frac{w_n(0)}{nw_1(0)}e^{(1-n^{2})a^{\prime }}}%
da^{\prime } \\
&=&\frac{1}{2w_1(0)}e^{a}-K+O(e^{-7a}),\ \text{as }a\rightarrow +\infty
\end{eqnarray*}%
with%
\[
K=\frac{1}{2w_1(0)}+\frac{1}{2w_1(0)}\int_{0}^{\infty }\frac{%
\sum_{n=3,5,...}\frac{w_n(0)}{nw_1(0)}e^{(2-n^{2})a^{\prime }}}{%
1+\sum_{n=3,5,...}\frac{w_n(0)}{nw_1(0)}e^{(1-n^{2})a^{\prime }}}%
da^{\prime }
\]%
We have then%
\[
a\sim \log (2w_1(0)(t+K)+O(t^{-7})),\ \text{as }t\rightarrow + \infty ,
\]%
and hence%
\[
M(t)=a^{\prime }\sim \frac{1}{t+K},\ \text{as }t\rightarrow +\infty .
\]%
Therefore%
\begin{equation}
w_n(t)\sim \frac{w_n(0)}{(2w_1(0)(t+K))^{n^{2}}}\text{ as }%
t\rightarrow +\infty .  \label{at}
\end{equation}

We can prove then the following Lemma:

\begin{lemma}
Let $w^{\text{in}}$ be the initial condition of the initial-boundary value
problem \eqref{e:bdd1}-\eqref{e:bdd3} and suppose that $w^{\text{in}}\in
L^{1}(\Omega )\cap L^{\infty }(\Omega )$. Then there exists a constant $C$, depending on  $w^{\text{in}}$, and a
time $T>0$ such that, for any $t>T$,
\[
\left\vert w(t,x)-
\frac{M}{2} \frac{\sin (x)}{1+Mt}
\right\vert \leq \frac{C}{t^{2}}.
\]
\end{lemma}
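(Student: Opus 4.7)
The natural approach is to work with the Fourier sine series representation
$w(t,x)=\sum_{n=1}^{\infty}w_n(t)\sin(nx)$
derived in the discussion preceding the lemma, in which $w_n(t)=w_n(0)\,e^{-n^{2}a(t)}$ and $a(t)=\log(2w_1(0)(t+K))+O(t^{-7})$ as $t\to\infty$. Since the target profile $\frac{M\sin x}{2(1+Mt)}$ lies entirely in the first Fourier mode, it is natural to split
$$w(t,x)-\frac{M\sin x}{2(1+Mt)}=\left[w_1(t)\sin x-\frac{M\sin x}{2(1+Mt)}\right]+\sum_{n\geq 2}w_n(t)\sin(nx),$$
and to estimate each piece separately.

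For the first-mode correction, substituting the expansion of $a(t)$ into $w_1(t)=w_1(0)e^{-a(t)}$ yields $w_1(t)=\frac{1}{2(t+K)}\bigl(1+O(t^{-7})\bigr)$. Identifying the constant $M$ of the lemma with $1/K$ makes the target $\frac{\sin x}{2(t+K)}$ match $w_1(t)\sin x$ to leading order, and the remainder satisfies
$$\left|w_1(t)\sin x-\frac{M\sin x}{2(1+Mt)}\right|\leq \frac{C}{t^{8}}.$$
For the higher modes I would use the elementary Fourier bound $|w_n(0)|\leq \frac{2}{\pi}\|w^{\text{in}}\|_{L^{1}}$ together with $e^{-a(t)}\leq C/(t+K)$, which gives $|w_n(t)|\leq C^{n^{2}}(t+K)^{-n^{2}}$. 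Since $|\sin(nx)|\leq 1$, the tail sum is dominated by its $n=2$ term and is $O(t^{-4})$. Adding the two contributions yields
$$\left|w(t,x)-\frac{M\sin x}{2(1+Mt)}\right|= O(t^{-4})\leq \frac{C}{t^{2}}$$
for $t>T$, which is in fact stronger than the claimed bound.

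The delicate point is to extract the asymptotic expansion of $a(t)$ with a remainder sharp enough to ensure that $w_1(t)-\frac{1}{2(t+K)}$ is smaller than $O(t^{-2})$; this is precisely the role of the analysis of $G(a)$ already carried out. Uniformity in $x$ is automatic from $|\sin(nx)|\leq 1$, and the positivity assumption $w^{\text{in}}\geq 0$ ensures $w_1(0)=\frac{2}{\pi}\int_0^\pi w^{\text{in}}(x)\sin(x)\,\mathrm{d}x>0$ whenever $w^{\text{in}}\not\equiv 0$, so that $K$ (hence $M=1/K$) is well defined and the above manipulations are legitimate.
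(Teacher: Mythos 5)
Your argument is correct and is essentially the paper's own proof written out in full: the paper's proof consists only of the bound $|w_n(0)|\leq \frac{2}{\pi}\Vert w^{\text{in}}\Vert_{L^{1}(\Omega)}$ together with a pointer to the Fourier representation and the asymptotics $w_n(t)\sim w_n(0)\,(2w_1(0)(t+K))^{-n^{2}}$, which is exactly the mode-by-mode estimate you carry out (first mode matched to the profile via the sharp expansion of $a(t)$ coming from $G(a)=\frac{1}{2w_1(0)}e^{a}-K+O(e^{-7a})$, tail dominated by its $n=2$ term of order $t^{-4}$). Your normalization $M=1/K$ is the natural one and in fact gives a rate $O(t^{-4})$; since any two profiles $\frac{M}{2}\frac{\sin x}{1+Mt}$ with different constants $M>0$ differ by $O(t^{-2})$, the bound as stated then holds for any admissible choice of $M$.
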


\begin{proof}
We note%
\[
w_n(0)=\frac{2}{\pi }\int_{0}^{\pi }w^{\text{in}}(x)\sin (nx)\,\mathrm{d}x,
\]
so that
\[
\left\vert w_n(0)\right\vert \leq \frac{2}{\pi }\int_{0}^{\pi }\left\vert
w^{\text{in}}(x)\right\vert \,\mathrm{d}x=\frac{2}{\pi }\left\Vert w^{\text{%
in}}\right\Vert _{L^{1}(\Omega )}
\]%
and use (\ref{wtx}), (\ref{at}).
\end{proof}

\bigskip \noindent{\bf{Acknowledgments:}} This article has been written when the
third author was visiting the Instituto de Ciencias Mat\'ematicas (ICMAT) in
Madrid. FS deeply thanks ICMAT for its hospitality.

The first author is supported by the project PID2020-113596GB-I00.
The second author benefited from the Emergence grant EMRG-33/2023 of Sorbonne University. 
The third author acknowledges the support of INdAM, GNFM group, and of the COST Action CA18232 MAT-DYN-NET, supported by COST (European Cooperation in Science and Technology).

The authors thank the anonymous referee for his/her remarks and suggestions which helped us in improving our paper.

\bibliographystyle{plain}
\bibliography{biblio.bib}

\end{document}